\numberwithin{equation}{section}
\DeclarePairedDelimiter\abs{\lvert}{\rvert}%
\DeclarePairedDelimiter\norm{\lVert}{\rVert}%
\DeclarePairedDelimiter\bnorm{\biggl\lVert}{\biggr\rVert}%
\let\oldabs\abs
\def\abs{\@ifstar{\oldabs}{\oldabs*}}
\let\oldnorm\norm
\def\norm{\@ifstar{\oldnorm}{\oldnorm*}}
\theoremstyle{plain}
\newtheorem{thm}{Theorem}[section]
\newtheorem{theorem}{Theorem}
\newtheorem*{assume}{Assumption}
\newtheorem{lem}[thm]{Lemma}
\newtheorem{prop}[thm]{Proposition}
\theoremstyle{definition}
\theoremstyle{remark}
\newcommand\R{\mathbb{R}}
\newcommand\Z{\mathbb{Z}}
\newcommand\N{\mathbb{N}}
\newcommand\D{\mathcal{D}}
\newcommand\K{\mathcal{K}}
\newcommand\E{\mathcal{E}}
\newcommand\Q{\mathcal{Q}}
\newcommand{\ben}{\begin{enumerate}[(i)]}
\newcommand{\een}{\end{enumerate}}
\newcommand{\ft}{\mathcal{F}}
\newcommand{\iftd}{\mathcal{F}^{-1}_{\mathbb{R}^d}}
\newcommand{\wh}{\widehat}
\newcommand{\les}{\lesssim}
\newcommand{\ges}{\gtrsim}
\newcommand{\supp}{\operatorname{supp}}
\newcommand{\meas}{\operatorname{meas}}
\newcommand{\diam}{\operatorname{diam}}
\newcommand{\rad}{\operatorname{rad}}
\newcommand{\inn}[1]{\langle #1 \rangle}
\newcommand{\sumab}[2]{\sum_{\substack{ {#1} \\ {#2} }}}
\begin{document}
\title[Quasiradial Fourier multipliers]{Endpoint bounds for quasiradial Fourier multipliers}
\subjclass[2000]{42B15, 42B25, 42B37}
\author{Jongchon Kim}
\address{Department of Mathematics, University of Wisconsin-Madison, Madison, WI 53706 USA}
\email{jkim@math.wisc.edu}
\begin{abstract}
We consider quasiradial Fourier multipliers, i.e. multipliers of the form $m(a(\xi))$ for a class of distance functions $a$. We give a necessary and sufficient condition for the multiplier transformations to be bounded on $L^p$ for a certain range of $p$. In addition, when $m$ is compactly supported in $(0,\infty)$, we give a similar result for associated maximal operators.
\end{abstract}
\maketitle
\section{Introduction}
This paper is concerned with $L^p$ estimates for a class of Fourier multiplier transformations $T_m$ given by
$$\widehat{T_m f} = m\widehat{f}$$
for a function $m$. Many deep results have been obtained for specific multipliers, such as the Bochner-Riesz multipliers (see, e.g., \cite{BG}). However, in the range $1\leq p\leq 2$, a characterization of $m$ for which $T_m$ is bounded on $L^p$ is known only for $p=1,2$ (see \cite{Hor}) and it is generally believed that such a characterization in reasonable terms is impossible when $1<p<2$. 

It came as a surprise that radial Fourier multipliers $m$ for which $T_m$ is bounded on $L^p$ can be characterized by a simple condition on the convolution kernel $\mathcal{F}^{-1} m$. Garrig\'{o}s and Seeger \cite{GS} gave such a characterization when $T_m$ acts on $L^p_{\rad}$, the space of radial $L^p$ functions. A breakthrough by Heo, Nazarov and Seeger \cite{HNS} extended the result to entire $L^p$ spaces, provided that the dimension is sufficiently high. In addition, Lee, Rogers and Seeger \cite{LRS} obtained an endpoint result for the $L^p$ boundedness of $T_m$ and the associated maximal operators 
$$M_m f(x) := \sup_{t>0} |T_{m(\cdot/t)} f(x)|$$
in terms of Besov spaces for a larger $p$-range. Furthermore, a characterization result for $M_m$ was obtained by the author \cite{KimMax}.

The goal of this paper is to generalize the results for radial Fourier multipliers to a class of quasiradial Fourier multipliers, i.e. 
multipliers of the form $m\circ a$, where $m$ is a function on $(0,\infty)$ and $a$ is a smooth positive function on $\R^d\setminus 0$ which is homogeneous of degree 1. In what follows, we let 
$$ \widehat{T_mf}(\xi)  = m(a(\xi)) \widehat{f}(\xi)$$
and $M_m$ be the maximal operator associated with $T_m$. A more general class of quasiradial Fourier multipliers has been also studied; see \cite{SeQ} and references therein.

Note that $\nabla a (\xi) \neq 0$ and the ``cosphere'' $\Sigma = \{ \xi : a(\xi) = 1 \}$ is a
smooth compact hypersurface. Throughout the paper, we shall further assume that 
\begin{assume}
$\Sigma$ has everywhere nonvanishing Gaussian curvature. 
\end{assume}

Let us first consider the case when $m$ is compactly supported in $(0,\infty)$ and discuss a necessary condition for the validity of the inequalities 
$\norm{T_mf}_{L^{p,q}} \les \norm{f}_{L^p}$ and $\norm{M_mf}_{L^{p'}} \les \norm{f}_{L^{p',q'}}$ for $1<p<2$, where $L^{p,q}=L^{p,q}(\R^d)$ is
the Lorentz space and $p'$ is the exponent dual to $p$. We remark that $L^{p,p} =L^p$ and $L^{p,\infty}$ is the weak $L^p$ space and refer the reader to \cite{BL} for a detailed treatment of the real interpolation method to be used in the present paper. By using a Schwartz function whose Fourier transform is 1 on the support of $m\circ a$, one obtains
$$\norm{\iftd[m\circ a]}_{L^{p,q}} \les \norm{T_m}_{L^{p} \to L^{p,q}} \leq \norm{M_m}_{L^{p',q'} \to L^{p'}},$$
where the second inequality is by duality; $\norm{T_m}_{L^{p} \to L^{p,q}} = \norm{T_m}_{L^{p',q'} \to L^{p'}}$.

By a result due to Lee and Seeger \cite[Theorem 2.2]{LS}, we have
\begin{equation}\label{eqn:equivalence}
\norm{\iftd[m\circ a]}_{L^{p,q}} \simeq \norm{\frac{\widehat{m}}{(1+|\cdot|)^{(d-1)/2}}}_{L^{p,q}(\nu_d)},
\end{equation}
where $\nu_d$ is the measure \[ d\nu_d(r) = (1+|r|)^{d-1}dr\] on $\R$, which was shown to hold even without the curvature assumption on $\Sigma$. It would be convenient to work with the norm on the right-hand side, since we shall rely on the Fourier inversion formula
\begin{equation}\label{eqn:inversion}
m(a(\xi)) = \frac{1}{2\pi} \int \widehat{m}(r) e^{ira(\xi)} dr.
\end{equation}

Next, consider the case when $m$ is not necessarily compactly supported, and suppose that $\norm{T_mf}_{L^{p,q}} \les \norm{f}_{L^p}$. Take a nontrivial smooth function $\phi$ which is compactly supported in $(0,\infty)$. Then by testing with $f= \iftd[\phi\circ a](t\cdot)$ for $t>0$, one obtains by \eqref{eqn:equivalence},
$$\sup_{t>0} \norm{\frac{\ft_\R[m(t\cdot)\phi]}{(1+|\cdot|)^{(d-1)/2}}}_{L^{p,q}(\nu_d)} \les \norm{T_m}_{L^p \to L^{p,q}}.$$

With the above notations, we are ready to state our main results.
\begin{thm} \label{thm:B}
Let $d\geq 4$, $1<p<\frac{2(d-1)}{d+1}$, and $p\leq q\leq \infty$. Then 
\begin{equation*}
 \norm{T_m}_{L^{p} \to L^{p,q}} \simeq \sup_{t>0} \norm{\frac{\ft_\R[m(t\cdot)\phi]}{(1+|\cdot|)^{(d-1)/2}}}_{L^{p,q}(\nu_d)} . 
\end{equation*}
\end{thm}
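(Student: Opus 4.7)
The lower bound $\gtrsim$ has already been obtained just above by testing the operator norm against the dilates $\mathcal{F}^{-1}_{\R^d}[\phi\circ a](t\cdot)$ and applying \eqref{eqn:equivalence}; the task is thus to prove the upper bound $\lesssim$, and I will write $A$ for the quantity on the right-hand side of the theorem.

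The plan is to dyadically decompose $m$ and reduce to a compactly supported characterization. Fix an auxiliary bump $\psi\in C_c^\infty((1/2,2))$ with $\sum_{k\in\Z}\psi(s/2^k)=1$ on $(0,\infty)$ and set $m_k(s):=m(s)\psi(s/2^k)$ and $\tilde m_k(s):=m(2^k s)\psi(s)$. Since $a$ is homogeneous of degree one, Euclidean dilation by $2^k$ intertwines $T_{m_k}$ with $T_{\tilde m_k}$, so $\|T_{m_k}\|_{L^p\to L^{p,q}}=\|T_{\tilde m_k}\|_{L^p\to L^{p,q}}$. Each $\tilde m_k$ is supported in $[1/2,2]$; applying the compactly supported companion result (the quasiradial analogue of the Lee--Rogers--Seeger endpoint bound \cite{LRS} for compactly supported symbols) and then \eqref{eqn:equivalence} yields
\[
\|T_{\tilde m_k}\|_{L^p\to L^{p,q}}\lesssim \Bigl\|\tfrac{\mathcal{F}_\R[\tilde m_k]}{(1+|\cdot|)^{(d-1)/2}}\Bigr\|_{L^{p,q}(\nu_d)} \le A,
\]
where a routine perturbation argument (writing $\psi=(\psi/\phi)\cdot\phi$ on any interval containing the support of $\psi$ and passing the smooth factor through the Fourier transform and the polynomial weight) lets me replace $\psi$ on the right by the fixed bump $\phi$ appearing in the statement.

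The main obstacle is to sum these uniform dyadic estimates into a single bound on $T_m$. Since $\widehat{T_{m_k} f}$ is supported in $\{2^{k-1}\le a(\xi)\le 2^{k+1}\}$, I would introduce Littlewood--Paley projections $P_k$ adapted to these $\Sigma$-dyadic annuli and write $T_m f=\sum_k T_{m_k}(P_k f)$. I then seek the vector-valued inequality
\[
\Bigl\|\sum_k T_{m_k} g_k\Bigr\|_{L^{p,q}}\lesssim A\,\Bigl\|\bigl(\sum_k |g_k|^2\bigr)^{1/2}\Bigr\|_{L^p},
\]
which, together with the standard square-function estimate $\|(\sum_k|P_k f|^2)^{1/2}\|_{L^p}\lesssim\|f\|_{L^p}$, yields the theorem upon setting $g_k=P_k f$. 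This is the hardest step: one cannot add naively in a Lorentz quasi-norm, so I would obtain it by real interpolation between the strong endpoint $q=p$, where frequency orthogonality and the $L^p$ Littlewood--Paley inequality suffice, and the weak endpoint $q=\infty$, where one must exploit the atomic or Nikishin-type structure of $L^{p,\infty}$ together with the frequency localization of each summand. The hypothesis $p\le q$ is precisely what is needed to align the two real-interpolation endpoints, while the restrictions $d\ge 4$ and $p<2(d-1)/(d+1)$ are inherited from the compactly supported case and reflect the use of curvature-dependent stationary-phase estimates on $\Sigma$.
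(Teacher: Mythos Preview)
Your proposal has a genuine gap at the summation step. The assertion that ``at the strong endpoint $q=p$, frequency orthogonality and the $L^p$ Littlewood--Paley inequality suffice'' is not correct when $p<2$. Littlewood--Paley theory gives $\norm{T_m f}_p\simeq\bignorm{(\sum_k|T_{m_k}P_kf|^2)^{1/2}}_p$, but the uniform scalar bounds $\norm{T_{m_k}}_{L^p\to L^p}\le A$ do \emph{not} yield the vector-valued inequality $\bignorm{(\sum_k|T_{m_k}g_k|^2)^{1/2}}_p\lesssim A\bignorm{(\sum_k|g_k|^2)^{1/2}}_p$; the operators $T_{m_k}$ differ from one another, so no Marcinkiewicz--Zygmund transfer is available. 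The only soft route uses $\ell^p\hookrightarrow\ell^2$ to pass to $(\sum_k\norm{T_{m_k}P_kf}_p^p)^{1/p}\le A(\sum_k\norm{P_kf}_p^p)^{1/p}$, which is the Besov $B^0_{p,p}$ norm of $f$ and, for $p<2$, is strictly larger than $\norm{f}_p$. Thus uniform control of the dyadic pieces alone cannot close the argument, and the sketch for $q=\infty$ is correspondingly too vague to repair this. This obstruction is well known and is precisely why multiplier theorems for $p<2$ require more than piecewise bounds.

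The paper avoids any such direct summation. It invokes the black box \cite[Theorem 4.2]{LRS}, quoted as Theorem~\ref{thm:com}, whose hypotheses demand for every $k$ and every scale $2^l$ a splitting $\K^k=\K^{k,sh}_l+\K^{k,lg}_l$ together with a quantitative decay $2^{-l\epsilon}$ for the long-range part acting on $L^2$-normalized atoms in cubes of side $2^l$. Producing this $\epsilon$-gain is the actual content of Section~\ref{sec:proofmultiplier}: it is obtained in Proposition~\ref{prop:globcpro} by interpolating the model inequality \eqref{eqn:goalc1rescale} (which already encodes the hard $L^2$ analysis of Sections~\ref{sec:B}--\ref{sec:L2}) against the improved $L^1$ bound of Lemma~\ref{lem:L1estimate}. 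That lemma exploits the fact that $K_r*b$ is essentially supported in a $2^l$-neighbourhood of the hypersurface $\{\psi(\cdot-c_Q)=r\}$ to replace the trivial estimate $(1+|r|)^{(d-1)/2}2^{ld/2}\norm{b}_2$ by $(1+|r|)^{(d-1)/2}2^{l/2}\norm{b}_2$. It is this geometric gain, not the compactly supported operator bound that you cite, that makes the dyadic pieces summable inside the \cite{LRS} machinery.
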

The result for the special case $a(\xi)=|\xi|$ is obtained in \cite{HNS,HNS1}. For compactly supported $m$, we may also characterize the $L^p$ boundedness of $M_m$ in the dual $p$-range.

\begin{thm} \label{thm:thmBMax}
Let $d\geq 4$, $1<p<\frac{2(d-1)}{d+1}$, and $p\leq q\leq \infty$. Assume that $m$ is compactly supported in $(0,\infty)$. Then
\begin{equation*}
 \norm{M_m}_{L^{p',q'} \to L^{p'}} \simeq \norm{T_m}_{L^{p} \to L^{p,q}} \simeq  \norm{\frac{\widehat{m}}{(1+|\cdot|)^{(d-1)/2}}}_{L^{p,q}(\nu_d)}.
\end{equation*}
\end{thm}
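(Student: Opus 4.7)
The chain $\norm{\widehat{m}/(1+|\cdot|)^{(d-1)/2}}_{L^{p,q}(\nu_d)} \les \norm{T_m}_{L^p\to L^{p,q}} \le \norm{M_m}_{L^{p',q'}\to L^{p'}}$ has already been recorded in the preceding discussion (via \eqref{eqn:equivalence}, a Schwartz test function, and duality), so one direction of the stated equivalences is free. The middle $\simeq$ follows from Theorem~\ref{thm:B} applied to our compactly supported $m$: choosing $\phi$ to equal $1$ on $\supp m$, only a bounded range of $t$ contributes to the supremum in Theorem~\ref{thm:B}, and a dilation change of variable together with the $L^{p,q}(\nu_d)$-boundedness of convolution by $\widehat{\phi}$ in the $r$-variable identifies that supremum with $\norm{\widehat m/(1+|\cdot|)^{(d-1)/2}}_{L^{p,q}(\nu_d)}$. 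Hence the only new content of Theorem~\ref{thm:thmBMax} is the maximal upper bound
\[
\norm{M_m}_{L^{p',q'}\to L^{p'}} \les \norm{\widehat m/(1+|\cdot|)^{(d-1)/2}}_{L^{p,q}(\nu_d)}.
\]

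Homogeneity of degree $1$ of $a$ yields the dilation identity $T_{m(\cdot/t)}f = K_t\ast f$ with $K_t(x)=t^d K(tx)$ and $K=\iftd[m\circ a]$, so $M_mf(x)=\sup_{t>0}|K_t\ast f(x)|$ is a dilation maximal function generated by a single kernel $K$. My plan is to perform a Littlewood--Paley decomposition $\widehat m=\sum_{j\ge 0}\widehat{m_j}$ with $\widehat{m_j}$ localized to $|r|\sim 2^j$ (and $\widehat{m_0}$ near the origin). Using the Fourier inversion formula \eqref{eqn:inversion} and stationary phase for the half-wave kernel $[e^{ira(\cdot)}]^{\vee}$, each piece $K_j=\iftd[m_j\circ a]$ is essentially concentrated where $a^{\ast}(x)\les 2^j$, with controlled decay (here is where the curvature assumption on $\Sigma$ enters, $a^{\ast}$ denoting the dual distance function). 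I then decompose $\widehat{m_j}/(1+|\cdot|)^{(d-1)/2}$ further into $L^{p,q}(\nu_d)$-atoms carried by sub-intervals of $\{|r|\sim 2^j\}$ and reduce the claim to a uniform restricted weak-type $(p',p')$ bound for the dilation maximal operator generated by each atomic piece. The real-interpolation characterization of Lorentz spaces (cf.\ \cite{BL}) then reassembles these atomic estimates into the desired global Lorentz bound.

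The main obstacle is the per-atom restricted weak-type estimate. In the radial case $a(\xi)=|\xi|$, this is the crux of \cite{LRS, KimMax}: it is handled by combining the Stein--Tomas $L^2$-restriction theorem on $S^{d-1}$ (available exactly in the range $p<\tfrac{2(d-1)}{d+1}$) with a Nikishin/Christ-type linearization of the dilation parameter $t$, together with a wave-packet decomposition of $e^{ir|\xi|}$. For the quasiradial extension, $S^{d-1}$ is replaced by $\Sigma$ and $e^{ir|\xi|}$ by $e^{ira(\xi)}$: Stein--Tomas is available thanks to our curvature hypothesis, and the wave-packet analysis carries over with the dual cosphere $\{a^{\ast}=1\}$ playing the role of the Euclidean sphere and the Hamiltonian flow of $a$ replacing straight-line propagation. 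The delicate point is to perform this Finsler wave-packet decomposition uniformly in the atom and absorb the geometric losses into the atomic normalization; once the per-atom estimate is established, the dyadic summation over $j$ and the Lorentz atomic decomposition complete the proof in the pattern of the radial arguments.
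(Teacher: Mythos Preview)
Your proposal is an outline rather than a proof: you identify the ``main obstacle'' and the ``delicate point'' (the per-atom restricted weak-type bound and the Finsler wave-packet analysis) but do not carry either out, so the maximal upper bound is not actually established. More importantly, you are missing the structural simplification that the paper is built around. After Sections~\ref{sec:B}--\ref{sec:L2}, the maximal estimate is essentially free: by a standard duality and Littlewood--Paley reduction (see \cite{LRS}), it suffices to bound the \emph{linearized} map $\{g_t\}_{t\in[1,2]}\mapsto \int_1^2 T_{m(\cdot/t)} g_t\,dt$ from $L^p$ (with input norm $\norm{\int_1^2|g_t|\,dt}_{L^p}$) to $L^{p,q}$. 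Using \eqref{eqn:inversion} one writes $\int_1^2 T_{m(\cdot/t)} g_t\,dt=\int K_r*f(r,\cdot)\,dr$ with $f(r,y)=\int_1^2 t\,\widehat m(tr)\,g_t(y)\,dt$, and Minkowski's inequality in $t$ gives $\norm{f(\cdot,y)/(1+|\cdot|)^{(d-1)/2}}_{L^{p,q}(\nu_d)}\les \norm{\widehat m/(1+|\cdot|)^{(d-1)/2}}_{L^{p,q}(\nu_d)}\int_1^2|g_t(y)|\,dt$. A single application of Proposition~\ref{prop:charLorentz} (the Lorentz version of the model inequality, obtained from Proposition~\ref{prop:char} by real interpolation) then finishes the proof.

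The Stein--Tomas/atomic/wave-packet program you sketch is therefore unnecessary and does not reflect how the paper obtains the range $p<\tfrac{2(d-1)}{d+1}$: that range enters not through an explicit restriction estimate but through the $L^2$ inequality of Lemma~\ref{lem:mainc}, which is already encoded in Proposition~\ref{prop:char}. Redoing that orthogonality analysis inside a maximal-operator framework, with a fresh atomic decomposition of $\widehat m$ and a quasiradial wave-packet argument, would at best reproduce what Proposition~\ref{prop:charLorentz} already supplies. The cleaner route---and the one the paper takes---is to dualize, linearize the supremum over $t$, and feed the result directly into the model inequality.
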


For the proof of Theorem \ref{thm:B}, it is natural to apply ideas from \cite{HNS,HNS1} for the radial case $a(\xi)=|\xi|$. However, we need a different formulation by the use of \eqref{eqn:inversion} to handle more general distance functions $a$, which seems to be at least originated from the study of the Weyl formula (see \cite{HorSpec} for a discussion). A new difficulty is the existence of Schwartz tails, which demands finer estimates relying on both the decay and the oscillation of the Fourier transform of a smooth measure on $\Sigma$.

There is also a generalization of the radial Fourier multiplier result by Lee, Rogers and Seeger \cite{LRS} to quasiradial Fourier multipliers. For this, we refer the reader to \cite{KimQuasi}. This paper is a shortened version of \cite{KimQuasi} excluding that result, which will appear elsewhere in a more general setting: spectral multipliers of pseudo-differential operators on compact manifolds (see \cite{KimSpec}).

It would be interesting to see what happens if the curvature assumption on $\Sigma$ is relaxed. However, the argument we present here alone does not seem to allow any endpoint result if we do not assume the maximal decay rate of the Fourier transform of the surface measure on $\Sigma$. The situation is quite different in $L^1$. Sharp weak-type (1,1) estimates for the Riesz means $R^\lambda_t$ were obtained by Christ and Sogge \cite{CS} without any curvature assumption on $\Sigma$. For the general $\Sigma$ which is a boundary of a convex body in $\R^2$ containing the origin, we refer the reader to \cite{Cladek} and references therein. 

\subsection*{Structure of the paper} We organize the paper as follows. In Section \ref{sec:B}, we derive a model inequality which implies a version of Theorem \ref{thm:B} for multipliers compactly supported in $(0,\infty)$. Then the inequality is reduced to a restricted weak type inequality, the proof of which is completed by a crucial $L^2$ estimate in Section \ref{sec:L2}. In Section \ref{sec:proofmaximal}, we derive the Lorentz space version of the model inequality for the compactly supported multipliers and then prove Theorem \ref{thm:thmBMax}. In Section \ref{sec:proofmultiplier}, we prove Theorem \ref{thm:B}. 

\subsection*{Notation} We let $N$ be a sufficiently large natural number (with respect to $d$) which may differ from line to line. We write $A \les B$ or $A=O(B)$ if $|A| \leq C B$ for an absolute constant $C>0$ which may depend on parameters such as $a, N, \epsilon, d, p, q$. We write $A \simeq B$ if $A \les B \les A$. We shall often abbreviate $\norm{f}_{L^p(\R^d)}$ to $\norm{f}_p$ and omit multiplicative constants which depend only on $d$, such as $(2\pi)^{-d}$. Throughout the paper, $p$ denotes a real number $1<p<2$ unless otherwise stated.

\subsection*{Acknowledgments} This paper will be a part of the author's PhD thesis. He would like to thank his advisor Andreas Seeger for suggesting this problem and the use of \eqref{eqn:inversion}, and helpful discussions. He also thank a referee for carefully reading the manuscript and pointing out several misprints. This work was supported in part by the National Science Foundation. 

\section{A model inequality} \label{sec:B}
\subsection*{Introduction} \label{sec:introc}
We first consider the case when $m$ is supported in $[1/2,2]$. Let $\eta$ be a smooth function supported in $[1/8,8]$ and $\eta = 1$ on
$[1/4,4]$, so that $m(\cdot /t)\eta = m(\cdot/t)$ if $t\in [1/2,2]$.

By \eqref{eqn:inversion}, we may write 
\begin{align*}
T_m g(x) &= \iint m(a(\xi)) \eta(a(\xi)) e^{i(x-y) \cdot \xi} g(y) dy d\xi \\
&= \iiint \eta(a(\xi))e^{ira(\xi)} e^{i(x-y) \cdot \xi } d\xi \widehat{m}(r)  g(y)dy dr \\
&= \int K_r * f(r, x) dr, 
\end{align*}
where $K_r$ is defined by $\wh{K_r}(\xi) = \eta(a(\xi)) e^{ir a(\xi)}$ and $f(r,y) = \widehat{m}(r) g(y)$.  Here, we have used the convention which shall be used throughout the paper; 
$$ K_r * f (r,x) = \int K_r(x-y) f(r,y) dy. $$

By the above discussion, in order to obtain
$$ \norm{T_mg}_{L^p} \les \norm{\frac{\widehat{m}}{(1+|\cdot|)^{(d-1)/2}}}_{L^p(\nu_d)} \norm{g}_{L^{p}},$$
it suffices to prove
\begin{prop} \label{prop:char}
Let $d\geq 4$ and $1\leq p<\frac{2(d-1)}{d+1}$. Then
\begin{equation} \label{eqn:goalc1}
\bnorm{ \int_\R  K_r *  f  (r,\cdot) dr }_p^p \les 
\int_{\R^d} \int_\R \abs{\frac{f(r,y)}{(1+|r|)^{(d-1)/2}}}^p d\nu_d(r) dy .
\end{equation}
\end{prop}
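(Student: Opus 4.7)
The plan is to adapt the Heo--Nazarov--Seeger framework for the radial case, with modifications required by the use of the general distance function $a$ via the Fourier inversion formula \eqref{eqn:inversion}. Because $\Sigma$ has everywhere nonvanishing Gaussian curvature, stationary phase applied to the oscillatory integral defining $K_r$ shows that, for $|r|$ large, $K_r(x)$ is concentrated in an $O(1)$-neighborhood of the dilated dual surface $r\Sigma^*$, where $\Sigma^* = \nabla a(\Sigma)$, with amplitude of size $(1+|r|)^{-(d-1)/2}$ on this shell and Schwartz tails away from it. This localization is the cornerstone of every subsequent estimate and plays the role that the Bessel-function asymptotics play in the radial case $a(\xi)=|\xi|$.

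By standard real interpolation I would first reduce Proposition~\ref{prop:char} to a restricted weak type $(p_0, p_0)$ statement for $p_0$ slightly larger than the target $p$: namely, it suffices to bound $\meas\{x:|Tf(x)|>\alpha\}$ by $\alpha^{-p_0}$ times the $\nu_d\otimes \meas$-measure of a set $E$, where $f$ is essentially the characteristic function of $E$ reweighted by $(1+|r|)^{(d-1)/2}$. To prove this, I would perform a Whitney/Calder\'on--Zygmund decomposition of $f$ scale-by-scale in $r$: at each dyadic scale $|r|\sim 2^k$, split $f_k = g_k + \sum_j b_{k,j}$ at an appropriate height, where each atom $b_{k,j}$ has mean zero in $y$ and is supported on a product $I_{k,j}\times Q_{k,j}$ with $Q_{k,j}\subset \R^d$ a dyadic cube of side $\lesssim 2^k$ and $I_{k,j}\subset\R$ a dyadic interval of length $\sim 2^k$. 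The exceptional set $\Omega^{*} = \bigcup_{k,j} Q_{k,j}^{*}$, with $Q_{k,j}^{*}$ chosen to accommodate the spatial spread of $K_r * b_{k,j}$ for $r\in I_{k,j}$, has Lebesgue measure controlled by $\alpha^{-p}$ times the right-hand side of \eqref{eqn:goalc1}.

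Off the exceptional set, the bad part is handled pointwise: the cancellation of each atom paired with a Taylor expansion of $K_r$ on the shell $r\Sigma^{*}$, together with the $(1+|r|)^{-(d-1)/2}$ decay there and the Schwartz tails elsewhere, yields an error that is rapidly summable in both $k$ and $j$. The good part $g=\sum_k g_k$ is controlled in a suitable weighted $L^2$ and is absorbed using the crucial $L^2$ inequality promised in Section~\ref{sec:L2}. Combined, these two estimates produce the restricted weak type inequality, whence \eqref{eqn:goalc1} follows.

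The principal obstacle is the $L^2$ bound of Section~\ref{sec:L2}. In the radial case one exploits rotation invariance and the explicit structure of Bessel kernels; for general $a$, neither is available. The natural substitute is to partition $\Sigma$ into dyadic angular caps of radius $2^{-\ell}$, decompose $K_r$ accordingly, and interpret each piece as an oscillatory integral essentially supported in a tube of controlled aspect ratio about a single normal direction on $r\Sigma^{*}$. One then exploits spatial almost-orthogonality between tubes and frequency orthogonality between caps, summing carefully over $\ell$; this is where both the precise decay $(1+|r|)^{-(d-1)/2}$ and the fine oscillation of $\wh{K_r}$ enter, and also where the Stein--Tomas exponent $\tfrac{2(d-1)}{d+1}$ shows up as the threshold for the available $L^2$ estimate on the cap-localized pieces. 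Keeping the Schwartz tails under control uniformly in this angular decomposition, and matching cancellation scales between the atoms and the cap decomposition, will be the technically delicate point.
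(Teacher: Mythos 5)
Your reduction to a (restricted) weak type estimate and your description of the kernel geometry are consistent with the paper: $K_r$ does concentrate, with amplitude $(1+|r|)^{-(d-1)/2}$ and Schwartz tails, on an $O(1)$-neighbourhood of a dilated dual hypersurface (in the paper, $\{x:\psi(x)=r\}$, cf. \eqref{eqn:ker}), and the paper likewise reduces \eqref{eqn:goalc1}, after discretizing in $r$ and in $y$, to the restricted weak type bound \eqref{eqn:goalc4}. The genuine gap is in how you propose to prove that bound. You replace the Heo--Nazarov--Seeger \emph{density decomposition} by a classical Calder\'on--Zygmund decomposition into mean-zero atoms $b_{k,j}$, and claim the bad part off the exceptional set is controlled by ``cancellation of each atom paired with a Taylor expansion of $K_r$''. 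That step does not work here: the kernel $K_r$ oscillates at unit scale (its phase is essentially $\psi(x)-r$ with $|\nabla\psi|\simeq 1$), so a vanishing-moment gain from an atom of side $2^k\gtrsim 1$ buys nothing --- $\nabla K_r$ is as large as $K_r$ --- and Taylor expanding $K_r$ across a cube of side comparable to $r$ is meaningless. Likewise, an exceptional set built from expanded cubes ``accommodating the spatial spread of $K_r*b_{k,j}$'' is either far too large (the spread is a ball of radius $\sim 2^k$, of measure $2^{kd}$ per cube) or, if you shrink it to the thin shells where the convolution actually lives, its total measure can only be summed because of the density stopping rule $\#\E_j\cap Q>\lambda^p l(Q)$ and the resulting bound \eqref{eqn:sumofsidelength}; a CZ stopping time on function heights does not supply this. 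This is precisely why the paper decomposes the \emph{index set} $\E_j$ in space-time into a high-density part (handled purely by the measure of the shells $\Psi(Q)$, with no cancellation) and a low-density part $\E_j(\lambda)$.

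Relatedly, the crucial $L^2$ estimate of Section \ref{sec:L2} (Lemma \ref{lem:mainc}) is not a weighted $L^2$ bound for a CZ ``good part'': its hypothesis is the density condition $\#\E_j\cap Q\leq\lambda^p l(Q)$ on the support set, which only becomes available after the density decomposition, so your framework cannot invoke it as stated. Its proof is also more elementary than the route you sketch: no angular cap decomposition, tube orthogonality, or Stein--Tomas input is used; one computes the pairwise scalar products $\inn{B_{n,z},B_{n',z'}}$ by Plancherel, bounds them by $(1+|(n,z)-(n',z')|)^{-(d-1)/2}$ with rapid decay off the resonant set $\psi(z'-z)\approx n-n'$ (Lemma \ref{lem:scalarc1}), and then counts the admissible pairs in shells/cylinders using the density hypothesis \eqref{eqn:ball}, giving $\norm{G_j}_2^2\les\lambda^{2p/(d-1)}\#\E_j$. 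The exponent $\frac{2(d-1)}{d+1}$ then emerges from balancing this $\lambda^{2p/(d-1)}$ against $\lambda^{-2}$ in Chebyshev's inequality, i.e.\ $\lambda^{2p/(d-1)-2}\leq\lambda^{-p}$, not from a restriction-theoretic threshold on cap-localized pieces. To repair your argument you would need to abandon the mean-zero-atom mechanism and adopt the support-density bookkeeping (or supply a genuinely new cancellation argument, which your sketch does not provide).
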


As a corollary of Proposition \ref{prop:char}, by duality, an endpoint version of the local smoothing estimate for half-wave operators $e^{ita(D)}$ for the dual range $p' > 2 + \frac{4}{d-3}$ can be obtained, strengthening a result by Heo and Yang \cite{HY}. However, this is not new and more general results were obtained by Lee and Seeger \cite{LS1}.

It is often convenient to work with a rescaled version of \eqref{eqn:goalc1} for the purpose of interpolation; 
\begin{equation} \label{eqn:goalc1rescale}
\bnorm{ \int_\R  K_r *  f  (r,\cdot) (1+|r|)^{(d-1)/2} dr }_p \les 
\norm{f}_{L^p(\R\times \R^{d}, \nu_d \times m_d)},
\end{equation}
where $m_d$ denotes the Lebesgue measure on $\R^d$.

In this section, we start the proof of Proposition \ref{prop:char}, which will be completed in Section \ref{sec:L2}.

\subsection{Preliminary estimates} \label{sec:kernel}
In this section, we recall bounds for the kernel $K_r$. One has
the $L^1$ estimate
\begin{equation}\label{eqn:l1}
\norm{K_r}_1 \les (1+|r|)^{\frac{d-1}{2}}. 
\end{equation}
This is a rescaled version of the estimate in \cite{SSS} which continues to hold 
without any curvature assumption on $\Sigma$. 

On the other hand, given the curvature assumption, one may obtain a
rather precise pointwise estimate of the kernel by the method of stationary
phase. For each $x \in \R^d \setminus 0$, there are exactly two points
$\xi_\pm(x)$ on $\Sigma$ which have their outward unit normal vectors $\pm
x/|x|$. Define $\psi_\pm(x) = \inn{x, \xi_\pm(x)}$. Note
that $\psi_\pm(x) = \pm|x|$ in the model case $a(\xi)
= |\xi|$. For a later reference, we note
that $\psi_\pm$ is smooth and homogeneous of degree $1$, $\psi_+ > 0 $, $\psi_-
<0$, and
$c_1 |x| \leq |\psi_\pm(x)| \leq c_2|x|$ for some constants $c_1,c_2>0$ which
depend only on $a$. By the method of stationary phase, we have
\begin{equation} \label{eqn:stationary}
\int_\Sigma e^{ix\cdot \xi'} d\mu(\xi') = \sum_{\pm} e^{i\psi_\pm(x)}b_\pm(x),
\end{equation}
where $b_+$ and $b_-$ are symbols of order $-(d-1)/2$ (see \cite{Sogge}). 

There is a polar coordinate with respect to $\Sigma$, namely $\xi = \rho \xi'$ where $\rho = a(\xi)$ and $\xi' \in \Sigma$. Let $n(\xi')$ be the outward unit normal vector at $\xi' \in \Sigma$ and $d\sigma$ be the surface measure on $\Sigma$. Then the Lebesgue measure on $\R^d$ admits the representation 
\begin{equation} \label{eqn:polar}
d\xi = \rho^{d-1}d\rho d\mu(\xi'),
\end{equation}
where $d\mu(\xi') = \inn{\xi,n(\xi')} d\sigma(\xi')$. We refer the reader to \cite{Dappa} and references therein.

By using the (generalized) polar coordinate \eqref{eqn:polar} and integration by parts exploiting the oscillation $e^{i \psi_\pm(x)}$, the following estimate was obtained in \cite{HY}.
\begin{equation*}
|K_r(x)| \leq C_N (1+|x|+|r|)^{-\frac{d-1}{2}} [ \sum_\pm
(1+|\psi_\pm(x) + r|)^{-N}].
\end{equation*}

Assume that $r\geq 2$ and let $\psi(x) = -\psi_{-}(x)$. Then 
\begin{equation} \label{eqn:ker}
|K_r(x)| \leq C_N (1+|x|+|r|)^{-\frac{d-1}{2}} (1+|\psi(x)-r|)^{-N},
\end{equation}
since $|\psi_-(x) + r |\leq C \psi_+(x) + r$ for some constant $C>0$.

We see that the kernel $K_r$ decays rapidly away from
the set $\{x: |\psi(x) -r | \leq 1\} = r\{x: |\psi(x)-1| \leq 1/r \}$. We remark that the set $\{x: |\psi(x)-1| \leq 1/r \}$ is contained in a $O(1/r)$ neighbourhood of the smooth hypersurface $\{x: \psi(x)=1\}$ and the measure of the set is $O(1/r)$. This gives an alternative way to obtain the $L^1$ estimate \eqref{eqn:l1}.

\subsection{Reduction to a restricted weak type inequality} \label{sec:redc}
We may assume, without the loss of generality, that in \eqref{eqn:goalc1}, the $r$ integration is taken over $r\geq 0$ since the case $r\leq 0$ can be handled similarly. Furthermore, we may assume that $r\geq 2$ by an $L^1$ estimate from \eqref{eqn:l1} (see, e.g., \cite{KimMax}).

We begin with the discretization of the $r$-variable as in \cite{HNS,Seeger}. Assume that we have 
\begin{equation} \label{eqn:disc2}
\bnorm{ \sum_{n\geq 2}  K_{n+u} *  f  (n+u,\cdot) }_p^p \les 
 \int \sum_{n\geq 2}|f(n+u,y)|^p (1+n+u)^{p(d-1)(\frac{1}{p}-\frac{1}{2})} dy,
\end{equation}
with an implicit constant uniform in $0 \leq u< 1$. To prove \eqref{eqn:goalc1}, we write $r=n+u$, where $n\in \N$ and $u\in [0,1)$. Then by Minkowski's inequality and H\"{o}lder's inequality,
\begin{align*}
\norm{\int_{2}^\infty  K_r *  f  (r,\cdot) dr }_p &\leq 
\int_{0}^{1} \bnorm{\sum_{n\geq 2}K_{n+u} *  f  (n+u,\cdot) }_p du \\
&\leq \left(\int_{0}^{1} \bnorm{\sum_{n\geq 2}K_{n+u} *  f  (n+u,\cdot) }_p^p du\right)^{1/p} \\
&\les \left(\int_{2}^\infty \int_{\R^d} \abs{\frac{f(r,y)}{(1+|r|)^{(d-1)/2}}}^p dy d\nu_d(r)\right)^{1/p}.
\end{align*}

Assume that for functions $f$ on $\N\times \R^d$, we have
\begin{equation} \label{eqn:disc}
\bnorm{ \sum_{n\geq 2}  K_{n+u} *  f  (n,\cdot) }_p \les 
\sum_{n\geq 2} \int |f(n,y)|^p dy (1+n)^{p(d-1)(\frac{1}{p}-\frac{1}{2})},
\end{equation}
with an uniform implicit constant for all $0\leq u < 1$. Then we observe that \eqref{eqn:disc2} holds. Thus, we have reduced \eqref{eqn:goalc1} to \eqref{eqn:disc}. 

Next, we discretize the $y$ variable in \eqref{eqn:disc} as in \cite{LRS}. Here, we will assume that $u=0$ for the sake of simplicity, but the argument would clearly indicate that estimates continue to hold with uniform implicit constants provided that $u=O(1)$. For each $(n,z) \in \N\times \Z^{d}$, assume that $b_{n,z}(y)$ is a function normalized by the condition $|b_{n,z}(y)| \leq \chi_{q_{z}}(y)$, where $\chi_{q_{z}}$ is the characteristic function on $q_{z} = \prod_{i=1}^d [z_i,z_i+1)$. Suppose that for any function $\gamma$ on $\N\times \Z^{d}$, we have
\begin{equation} \label{eqn:goalc12}
\bnorm{\sum_{n\geq 2} \sum_{z \in \Z^{d}} \gamma_{n,z}  K_n *  b_{n,z} }_p^p \les
\sum_{n\geq 2} \sum_{z \in \Z^{d}}|\gamma_{n,z}|^p (1+n)^{p(d-1)(\frac{1}{p}-\frac{1}{2})},
\end{equation}
where the implicit constant is independent of the choice of $b_{n,z}$. 

We claim that \eqref{eqn:goalc12} implies \eqref{eqn:disc} (with $u=0$). Note that there is a Schwartz function $\zeta$, such that $K_r * \zeta = K_r$ for any $r\in \R$ by the compact support of $\widehat{K_r}$. Set 
\begin{align*}
\gamma_{n,z} &= \sup_{y\in q_{z}} |\zeta * f (n,y)|, \\
b_{n,z} (y)&= \gamma_{n,z}^{-1}\chi_{q_{z}}(y) \zeta*f(n,y), \; \text{ if $\gamma_{n,z} \neq 0$,}
\end{align*}
and $b_{n,z}(y) = 0$ if $\gamma_{n,z} = 0$. The claim follows since $\sum_{z\in \Z^d} \gamma_{n,z} b_{n,z} = \zeta*f(n,\cdot)$ and
$\sum_{z\in \Z^d} |\gamma_{n,z}|^p \les \int |f(n,y)|^p dy, $
which is a consequence of the fact that $|\gamma_{n,z}|
\les u_N * |f| (n,y)$ uniformly in $y\in q_z$, where $u_N(x) =
(1+|x|)^{-N}$. 

Let $\tilde{\E}_j = (I_j \cap \N) \times \Z^d$, where $I_j = [2^j, 2^{j+1})$. Then \eqref{eqn:goalc12} follows from 
\begin{equation} \label{eqn:goalc123}
\bnorm{\sum_{j\geq 1} 2^{j(d-1)/2} \sum_{(n,z) \in \tilde{\E}_j} \gamma_{n,z}  K_n *  b_{n,z} }_p^p \les
\sum_{j\geq 1} 2^{j(d-1)} \sum_{(n,z) \in \tilde{\E}_j} |\gamma_{n,z}|^p.
\end{equation}

Let $\mu_d$ be the measure on $\N \times \Z^d$ defined by 
$$\mu_d(E) = \sum_{j\geq 1} 2^{j(d-1)} \# \{ (n,z)\in E : n\in I_j \}$$
and let $T$ be the operator acting on functions on $\N\times \Z^d$ by
$$ T \gamma = \sum_{j\geq 1} 2^{j(d-1)/2} \sum_{(n,z) \in \tilde{\E}_j} \gamma_{n,z}  K_n *  b_{n,z}. $$
Then \eqref{eqn:goalc123} is equivalent to the estimate
$ \norm{T \gamma}_{L^p(\R^d)} \les \norm{\gamma}_{L^p(\mu_d)}.$

The case $p=1$ is trivial by the triangle inequality and the $L^1$ estimate \eqref{eqn:l1}. Thus, it suffices to prove the following restricted weak type inequality; For $1<p<\frac{2(d-1)}{d+1}$ and $\lambda>0$,
\begin{equation}\label{eqn:goalc4}
 \meas\{x: |\sum_{j\geq 1} 2^{j(d-1)/2} \sum_{(n,z)\in \E_j }  K_n *
b_{n,z} | > \lambda \} \les \lambda^{-p} \sum_{j\geq 1 } 2^{j(d-1)} \# \E_j,
\end{equation}
where $\E_j$ is a finite subset of $\tilde{\E}_j$ and the implicit constant is independent of the choice of $\E_j$ and $b_{n,z}$. Note that by the $L^1$ estimate, \eqref{eqn:goalc4} holds for $p=1$. Thus, we may assume that $\lambda > C$ for a fixed large constant $C>1$.

\subsection{Density decomposition of $\E_j$}
For each $1\leq k \leq j$, let $D^j_k$ be the collection of (half open) dyadic cubes of side length $2^k$ in $\R^{1+d}$ which tile $[2^j,2^{j+1}) \times \R^d$, and then let $\D_j = \bigcup_{k=1}^j D^j_k$. 

Let $\Q_j(\lambda)$ be the collection of cubes $Q \in \D_j$ such that $$ \# \E_j \cap Q > \lambda^p l(Q),$$
where $l(Q)$ denotes the side length of $Q$. Then let $Q_j(\lambda)$ be the collection of the maximal cubes in $\Q_j(\lambda)$. Note that $Q_j(\lambda)$ is a collection of finitely many disjoint dyadic cubes $Q\in \D_j$. Then we set $$\E_j(\lambda) = \E_j \setminus \bigcup_{Q \in Q_j(\lambda)} Q.$$

This is a dyadic version of the density decomposition (also known as the modified Calder\'{o}n-Zygmund decomposition) given in \cite{HNS}. By the construction, we immediately obtain
\begin{equation} \label{eqn:sumofsidelength}
\sum_{Q\in Q_j(\lambda)}  l(Q)  < \lambda^{-p}\sum_{Q\in Q_j(\lambda)}  \# \E_j \cap Q  \leq \lambda^{-p} \# \E_j,
\end{equation}
and that \begin{equation} \label{eqn:goodpart}
\# \E_j(\lambda) \cap Q \leq \lambda^p l(Q),
\end{equation}
for any $Q\in \D_j$. 

With the notation $B_{n,z} =  K_n *b_{n,z},$ \eqref{eqn:goalc4} follows from
\begin{align} \label{eqn:goalc4high}
 \meas\{x: |\sum_{j\geq 1} 2^{j(d-1)/2} \sum_{Q\in Q_j(\lambda)} \sum_{(n,z)\in \E_j \cap Q }  B_{n,z} | > \lambda \} &\les \lambda^{-p} \sum_{j\geq 1 } 2^{j(d-1)} \# \E_j, \\ \label{eqn:goalc4low}
 \meas\{x: |\sum_{j\geq 1} 2^{j(d-1)/2} \sum_{(n,z)\in \E_j(\lambda) }  B_{n,z}| > \lambda \} &\les \lambda^{-p} \sum_{j\geq 1 } 2^{j(d-1)} \# \E_j.
 \end{align}

In order to prove \eqref{eqn:goalc4low} for $1<p<\frac{2(d-1)}{d+1}$, it suffices to prove the following $L^2$ estimate by Chebyshev's inequality.
\begin{lem} \label{lem:mainc}
Let $d\geq 4$. Assume that $\# \E_j\cap Q \leq \lambda^p l(Q)$ for any $Q\in \D_j$. Then
\begin{equation*}
\bnorm{\sum_{j\geq 1} 2^{j(d-1)/2} \sum_{(n,z)\in \E_j }  B_{n,z}}_{2}^2 \les 
\lambda^{2p/(d-1)} \log_2\lambda \sum_{j\geq 1 } 2^{j(d-1)} \# \E_j.
\end{equation*}
\end{lem}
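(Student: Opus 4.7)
The plan is to expand $\|\cdot\|_2^2$ as a bilinear sum and bound each pair by combining a bilinear kernel estimate with the density hypothesis. First, I would write
\begin{align*}
\bnorm{\sum_{j\geq1}2^{j(d-1)/2}\sum_{(n,z)\in\E_j}B_{n,z}}_2^2 = \sum_{j,j'}2^{(j+j')(d-1)/2}\sum_{(n,z),\,(n',z')}\langle B_{n,z},B_{n',z'}\rangle.
\end{align*}
Since $\widehat{K_n}\,\overline{\widehat{K_{n'}}} = \eta(a)^2 e^{i(n-n')a}$, the bilinear kernel $K_n * \widetilde{K_{n'}}$ admits essentially the same stationary-phase description as $K_{n-n'}$ (with $\eta$ replaced by $\eta^2$ and $r$ by $n-n'$), and the argument leading to \eqref{eqn:ker} produces
\[
|\langle B_{n,z},B_{n',z'}\rangle| \lesssim (1+|z-z'|+|n-n'|)^{-(d-1)/2}\bigl(1+|\psi_\pm(z-z')-(n-n')|\bigr)^{-N},
\]
where the sign $\pm$ depends on $\mathrm{sgn}(n-n')$.

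Next, for fixed $(n,z)\in\E_j$ and each dyadic scale $|z-z'|+|n-n'|\sim 2^l$ together with the localization $|\psi_\pm(z-z')-(n-n')|\lesssim 1$, I would count the $(n',z')\in\E_{j'}$ lying in this region, which is a thickness-$O(1)$ neighborhood of a smooth $d$-dimensional hypersurface of $(1+d)$-volume $\sim 2^{ld}$. Covering this region with dyadic $(1+d)$-cubes of side $\min(2^l,2^{j'})$ in $\D_{j'}$, the density hypothesis gives the count bound $\lesssim \lambda^p\cdot 2^{\min(l,j')}$, competing with the trivial volume bound $2^{ld}$. Summing against the kernel decay $2^{-l(d-1)/2}$ over $l$, the maximum contribution occurs near the critical scale $l_*\simeq p\log_2\lambda/(d-1)$ where the density and volume bounds balance.

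Finally, the pairwise contributions are assembled over $(j,j')$ by a Schur-type argument exploiting the decay of the kernel in $|n-n'|$; the logarithmic factor $\log_2\lambda$ emerges from summing over the $O(\log_2\lambda)$ dyadic scales (or density levels) that contribute near-maximally. \emph{The main obstacle} is producing the sharp exponent $2p/(d-1)$ rather than the looser $p(d+1)/(2(d-1))$ that a direct density-volume balance at $l_*$ would yield; overcoming this requires a more refined analysis, e.g.\ an iterated density decomposition of $\E_j$ into level sets indexed by the dyadic density of the $(1+d)$-cubes, together with careful bookkeeping of the cross-scale $(j,j')$ interactions via the decay in $|n-n'|$.
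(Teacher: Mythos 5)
Your bilinear setup (expanding the square, the stationary-phase bound for $\langle B_{n,z},B_{n',z'}\rangle$ as in \eqref{eqn:innerc0} with the $\psi$-localization, and counting resonant pairs through the density hypothesis \eqref{eqn:ball}) is exactly how the paper treats the cross terms $\inn{G_j,G_k}$ and the far off-diagonal pairs. But the obstacle you flag at the end is a genuine gap, and the remedy you suggest is not the one that works. A purely pairwise ``kernel decay times density count'' argument stalls at $\lambda^{p(d+1)/(2(d-1))}$: at scales $2^l\leq\lambda^{p/(d-1)}$ the trivial lattice count of near-cone neighbours ($\sim 2^{ld}$, consistent with the density hypothesis at those scales) saturates, each such pair genuinely contributes $\sim 2^{-l(d-1)/2}$ in absolute value, and no ``iterated density decomposition'' changes this, because you are summing absolute values of scalar products that can individually be that large. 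The paper closes the gap by a different mechanism: an orthogonality estimate, Lemma \ref{lem:subint}, which says that for any subinterval $I\subset I_j$ one has $\bignorm{\sum_{(n,z)\in\E_{j,I}}B_{n,z}}_2^2\les |I|\,\#\E_{j,I}$, proved by Cauchy--Schwarz in the $n$-variable, Plancherel with $\norm{\widehat{K_n}}_\infty\les1$, and the disjointness of the supports of $b_{n,z}$ for fixed $n$ --- no kernel decay and no density hypothesis enter. One then splits $I_j$ into intervals of length $|I|=\lambda^{2p/(d-1)}$, uses Lemma \ref{lem:subint} inside each interval, and uses your pairwise bound \eqref{eqn:innerc0} plus \eqref{eqn:ball} only for pairs separated by at least $|I|$, which gives $\lambda^p|I|^{-(d-3)/2}=\lambda^{2p/(d-1)}$; the sharp exponent comes from balancing the orthogonality cost $|I|$ against the counting bound, not volume against density.

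Two further points. First, your cross-scale count ``$\les\lambda^p2^{\min(l,j')}$'' is wrong in the regime that matters, $2^l\simeq 2^j\gg 2^{j'}$: the resonant set is a $2^{j'}$-thickened sphere of radius $2^j$, which cannot be placed in $O(1)$ cubes of the maximal admissible side $2^{j'}$; the correct covering, as in \eqref{eqn:countingballs}, carries the extra factor $2^{(j-j')(d-1)}$. With that (larger) count the cross terms are still fine, but only for $j'>4\log_2\lambda$, where $\lambda^p2^{-j'(d-3)/2}\les1$. Second, this is precisely where the $\log_2\lambda$ in Lemma \ref{lem:mainc} comes from in the paper: the block $j\leq4\log_2\lambda$ is handled by Cauchy--Schwarz over its $O(\log_2\lambda)$ values of $j$, reducing everything to the diagonal bound $\norm{G_j}_2^2\les\lambda^{2p/(d-1)}\#\E_j$, while the remaining cross terms are summed without loss. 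Your Schur-type assembly, as stated, does not substitute for this splitting.
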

 
We shall prove Lemma \ref{lem:mainc} in Section \ref{sec:L2}. We shall prove \eqref{eqn:goalc4high} in the next subsection. 

\subsection{The high density part}
Let $(r_Q,y_Q)$ be the center of $Q\in \D_j$ and 
$$\Psi(Q) = \{x: |\psi(x-y_Q) - r_Q | \leq C_1 l(Q) \}, $$
where $C_1 = 5d (\norm{\nabla \psi}_{L^\infty}+1)$. Then $\sum_{(n,z) \in \E_j \cap Q } B_{n,z}$ is essentially supported in $\Psi(Q)$. Indeed, if $(n,z) \in Q$, then 
\begin{equation} \label{eqn:out}
\norm{ B_{n,z} }_{L^1(\Psi(Q)^c)} \les 2^{j(d-1)/2} l(Q)^{-N}.
\end{equation}
To see this, assume that $x \notin \Psi(Q)$ and $y \in q_z$. Then since $|n-r_Q| \leq l(Q)$ and 
$|\psi(x-y)-\psi(x-y_Q)| \leq \norm{\nabla \psi}_{L^\infty} |y-y_Q| \leq C_1 l(Q)/5$, we get 
$$|\psi(x-y)-n| \geq |\psi(x-y_Q) -r_Q| - C_1 l(Q)/2 \geq l(Q).$$ 

Thus we get by the kernel estimate \eqref{eqn:ker}, $$|K_n(x-y)| \les 2^{-j(d-1)/2} l(Q)^{-N} (1+|\psi(x-y)-n|)^{-N}.$$
Since $\int (1+|\psi(x)-n|)^{-N} dx\les 2^{j(d-1)}$ if $n \simeq 2^j$, \eqref{eqn:out} follows by the normalization of $b_{n,z}$.

On the other hand, we have a favourable bound for the measure of $\Psi(Q)$. 
Note that by \eqref{eqn:sumofsidelength},
\begin{align*}
\meas \big[\bigcup_{j\geq 1} \bigcup_{Q \in Q_j(\lambda)} \Psi(Q) \big]
\les  \sum_{j\geq 1} \sum_{Q\in Q_j(\lambda)} 2^{j(d-1)} l(Q) \leq \lambda^{-p} \sum_{j\geq 1} 2^{j(d-1)} \# \E_j.
\end{align*}
Thus, it suffices to prove \eqref{eqn:goalc4high} with $B_{n,z}$ replaced by $B_{n,z}  \chi_{\Psi(Q)^c}$. But then by the $L^1$ estimate \eqref{eqn:out} and Chebyshev's inequality, we may bound the measure by 
\begin{align*}
&\lambda ^{-1} \sum_{j\geq 1} 2^{j(d-1)/2} \sum_{Q\in Q_j(\lambda)}
\sum_{(n,z)\in \E_j \cap Q }  \norm{B_{n,z}}_{L^1( \Psi(Q)^c)} \\
&\les \lambda ^{-1} \sum_{j\geq 1} 2^{j(d-1)} \sum_{Q\in Q_j(\lambda)} l(Q)^{-N}
\#\E_j \cap Q.
\end{align*}
Finally, observe that if $Q\in Q_j(\lambda)$, then $l(Q) \geq \lambda^{p/d}$. This follows from the fact that $\# \E_j \cap Q$, which is at least $\lambda^p l(Q)$, is at most ${l(Q)}^{d+1}$ due to the lattice structure of $\E_j$. If we take $N$ large enough, we obtain \eqref{eqn:goalc4high}.

\section{Proof of Lemma \ref{lem:mainc}}\label{sec:L2}
We may assume that the sum $\sum_{j\geq 1}$ is taken over a $10$-separated set of natural numbers, by breaking the original sum into finitely many sums. Following \cite{HNS}, we shall further break the sum as 
$$ \sum_{j \leq 4 \log_2 \lambda} 2^{j(d-1)/2}  G_j+ \sum_{j > 4\log_2 \lambda}
2^{j(d-1)/2} G_j, $$
where $G_j = \sum_{(n,z)\in \E_j} B_{n,z}.$ Then we obtain
\begin{align*}
\bnorm{\sum_{j\geq 1}  2^{j(d-1)/2} G_j}_2^2 &\les \log_2 \lambda \sum_{j \leq
4\log_2 \lambda} 2^{j(d-1)} \norm{G_j}_2^2 + \bnorm{\sum_{j > 4\log_2 \lambda}
2^{j(d-1)/2} G_j }_2^2 \\
&\les \log_2 \lambda \sum_{j\geq 1} 2^{j(d-1)}\norm{G_j}_2^2 \\
&+ \sum_{4\log_2
\lambda<k<j} 2^{(j+k)(d-1)/2} |\inn{G_j,G_k}|.
\end{align*}

We remark that if $B$ be any ball in $\R^{1+d}$ of diameter $\diam(B)$, then it follows from the assumption that
\begin{equation} \label{eqn:ball}
\# \E_j \cap B \leq C_d \left(1+\frac{\diam(B)}{2^j} \right)^d \lambda^p \diam(B).
\end{equation}
The estimate will be used only when $\diam(B) \les 2^j$.

\subsection{Scalar products}\label{sec:scalar}
Fix a large constant $C>0$. For each $(n,z) \in \E_j$, we decompose $\E_k$, when $k< j$, as follows.
\begin{align*}
\E_k(z) &= \{(n',z')\in \E_k : 2^{j-5} \leq \psi(z'-z) \leq 2^{j+5} \} \\
\E_k^l (n,z) &=
\{(n',z')\in \E_k(z) : |\psi(z'-z) - n| < C 2^k \} &\text{if } l=0, \\
&=\{(n',z')\in \E_k(z) : C 2^{k+l-1} \leq |\psi(z'-z) - n| <C 2^{k+l} \} 
&\text{if }
l \geq 1.
\end{align*}

Note that the ``cylinder" $\{(r,y)\in \R^{1+d}:|\psi(y-z)-n| \leq C 2^k, r\in I_k \}$, thus $\E_k^0(n,z)$, can be covered by  $O(2^{(j-k)(d-1)})$ many balls of radius $2^k$ in $\R^{1+d}$. Similarly, $\E_k^l(n,z)$ can be covered by $O(2^{(j-k)(d-1)}2^l)$ many balls of radius $2^k$. By \eqref{eqn:ball}, we get
\begin{equation} \label{eqn:countingballs}
\# \E_{k}^{l}(n,z) \les \lambda^p 2^{(j-k)(d-1)} 2^{k+l}, 
\end{equation}
for $l \geq 0$, where the implicit constant is independent of $(n,z)\in \E_j$.

Then we may bound $|\inn{G_j,G_k}|$ by I $+$ II $+$ III, where
\begin{align*}
 \text{I} &= \sum_{(n,z)\in \E_j} | \inn{B_{n,z}, \sum_{(n',z')\in \E_k \setminus \E_k(z)} B_{n',z'} }| \\
 \text{II} &= \sum_{(n,z)\in \E_j} \sum_{l\geq 1}
\sum_{(n',z')\in \E_{k}^{l}(n,z)}  |\inn{B_{n,z}, B_{n',z'}} | \\
 \text{III} &= \sum_{(n,z)\in \E_j} \sum_{(n',z')\in
\E_{k}^{0}(n,z)} |\inn{B_{n,z}, B_{n',z'}} |.
\end{align*}
Here, III is the main term.

\begin{lem} \label{lem:scalarc1}
For any pairs $(n,z), (n',z')$, we have
\begin{equation}\label{eqn:innerc0}
|\inn{B_{n,z}, B_{n',z'}}| \les (1+|(n,z)-(n',z')|)^{-(d-1)/2}.
\end{equation}

If $(n',z') \in \E_{k}^{l}(n,z)$ for some $l\geq 1$, then
\begin{equation} \label{eqn:innerc3}
 |\inn{B_{n,z}, B_{n',z'}}| \les 2^{-j(d-1)/2} 2^{-(k+l)N}. 
\end{equation}
\end{lem}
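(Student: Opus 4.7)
The plan is to rewrite the scalar product as a single integral against a kernel of the same form as $K_r$, and then invoke the pointwise bounds of Section~\ref{sec:kernel}. A direct computation (or Parseval) gives
\begin{equation*}
\inn{B_{n,z},B_{n',z'}} = \iint b_{n,z}(y)\, \overline{b_{n',z'}(y')}\, L_{n-n'}(y'-y)\,dy\,dy',
\end{equation*}
where $L_r$ is defined by $\widehat{L_r}(\xi) = \eta(a(\xi))^2 e^{i r a(\xi)}$; that is, $L_r$ is built exactly as $K_r$ but with $\eta^2$ in place of $\eta$. Since $\eta^2$ shares the support and smoothness of $\eta$, the estimates \eqref{eqn:l1} and \eqref{eqn:ker} apply verbatim to $L_r$.

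For \eqref{eqn:innerc0} I would use the unrefined bound $|L_r(w)| \les (1+|w|+|r|)^{-(d-1)/2}$, which is valid for all $r$. For $y\in q_z$ and $y'\in q_{z'}$ one has $|y'-y| = |z'-z| + O(1)$, and hence $|L_{n-n'}(y'-y)| \les (1+|(n,z)-(n',z')|)^{-(d-1)/2}$. Integrating the normalizations $|b_{n,z}|\leq \chi_{q_z}$ and $|b_{n',z'}|\leq \chi_{q_{z'}}$ over the unit cubes then gives \eqref{eqn:innerc0}.

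For \eqref{eqn:innerc3} I would invoke the refined kernel estimate \eqref{eqn:ker}. Since the $j$-sums are $10$-separated and $(n',z')\in \E_k^l(n,z)$ forces $k<j$, one has $n-n'\simeq 2^j \geq 2$, so \eqref{eqn:ker} applies with $r = n-n'$. The membership also gives $|z'-z|\simeq 2^j$ and $|\psi(z'-z) - n|\geq C 2^{k+l-1}$. For $w = y'-y$ with $y\in q_z$, $y'\in q_{z'}$, the smoothness and degree-one homogeneity of $\psi$ yield $|\psi(w) - \psi(z'-z)| \leq 2\sqrt{d}\,\norm{\nabla\psi}_{L^\infty}$. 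Combined with $n'\leq 2^{k+1}$ and $C$ taken sufficiently large in the definition of $\E_k^l$, this produces $|\psi(w)-(n-n')| \geq |\psi(z'-z)-n| - n' - O(1) \gtrsim 2^{k+l}$. Since also $|w|+|n-n'| \simeq 2^j$, one obtains $|L_{n-n'}(w)| \les 2^{-j(d-1)/2}\, 2^{-(k+l)N}$, and integrating over the unit cubes yields \eqref{eqn:innerc3}. The only real subtlety is this last bookkeeping step: one must choose $C$ large enough (depending on $\norm{\nabla\psi}_{L^\infty}$) to absorb both the shift by $-n'\leq 2^{k+1}$ and the $O(1)$ discrepancy from $|w-(z'-z)|\leq 2\sqrt{d}$. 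Beyond this, the proof is a direct combination of Parseval with the kernel estimates from Section~\ref{sec:kernel}.
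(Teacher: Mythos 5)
Your proof is correct and follows essentially the same route as the paper's: Plancherel turns $\inn{B_{n,z},B_{n',z'}}$ into an integral of $b_{n,z}\overline{b_{n',z'}}$ against the kernel $\tilde{K}(n-n',y'-y)$ (your $L_{n-n'}$, with $|\eta|^2$ in place of $\eta$), and the stationary-phase bounds of Section \ref{sec:kernel} then give \eqref{eqn:innerc0}, while the observation that $\psi(y'-y)-(n-n')=\psi(z'-z)-n+O(2^k)$ together with the largeness of $C$ gives \eqref{eqn:innerc3}.
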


Moreover,
\begin{equation} \label{eqn:innerc1}
| \inn{B_{n,z}, \sum_{(n',z')\in \E_k \setminus \E_k(z)} B_{n',z'} }|
\les 2^{-jN},
\end{equation}
where the implicit constant is independent of $(n,z)$.

\begin{proof}
By Plancherel's theorem, $\inn{B_{n,z}, B_{n',z'}}$ is a constant times 
\begin{align} \label{eqn:scalarcb}
 \iint \tilde{K}(n-n',y'-y) b_{n,z}(y) \overline{b_{n',z'} (y')}
dy dy',
\end{align}
where $\tilde{K}(r,x) = \int |\eta(a(\xi))|^2 e^{ira(\xi)}
e^{ix\cdot\xi} d\xi.$ 

Then we have the following bounds for $\tilde{K} = \tilde{K}(n-n',y'-y)$ by \eqref{eqn:ker}, 
\begin{equation}\label{eqn:boundI}
|\tilde{K}| \les (1+|y'-y|+|n-n'|)^{-(d-1)/2} (1+|\psi(y'-y)-(n-n')|^2)^{-N}.
\end{equation}
By inserting \eqref{eqn:boundI} to \eqref{eqn:scalarcb}, we obtain \eqref{eqn:innerc0}. 
 
Next, observe that
$$|\psi(y'-y)-(n-n')| = |\psi(z'-z)-n| + O(2^k).$$
Thus, if $|\psi(z'-z)-n| \geq C 2^{k+l-1}$ for a sufficiently large constant $C>0$, i.e. $(n',z') \in \E_{k}^{l}(n,z)$, then $|\psi(y'-y)-(n-n')| \ges 2^{k+l}$ and we obtain the additional decay $2^{-(k+l)N}$, which implies \eqref{eqn:innerc3}.

For \eqref{eqn:innerc1}, we observe that if $\psi(y'-y) > 2^{j+5}$ or $\psi(y'-y) < 2^{j-5}$, then 
\begin{equation}  \label{eqn:estIerror}
|\tilde{K} | \leq C_N 2^{-jN} (1+|y'-y|)^{-N},
\end{equation}
since $|n-n'| \simeq 2^j$. By the normalization of $b_{n',z'}$,
$$\sum_{(n',z')\in \E_k \setminus \E_k(z)} |b_{n',z'}|  \leq 2^k.$$
Thus, we may bound $| \inn{B_{n,z}, \sum_{(n',z')\in \E_k \setminus \E_k(z)} B_{n',z'} }|$ by
$$C 2^{-jN} 2^k \iint (1+|y'-y|)^{-N}  \chi_{q_z}(y) 
dy dy' \les 2^{-jN}.$$
\end{proof}

Using Lemma \ref{lem:scalarc1}, we may bound the scalar product $\inn{B_{n,z},B_{n',z'}}$ in the main term III by $2^{-j(d-1)/2}$, since $|z-z'|  \simeq 2^j$. Thus, using \eqref{eqn:countingballs}, we get
$$\text{III} \les \lambda^p 2^{j(d-1)/2} 2^{-k(d-2)} \# \E_j .$$ 
Similarly, we see that $\text{I} \les 2^{-jN} \# \E_j$ and $\text{II} \les \lambda^p 2^{j(d-1)/2} 2^{-kN} \# \E_j $, giving
$$|\inn{G_j,G_k}| \les \lambda^p 2^{j(d-1)/2} 2^{-k(d-2)} \# \E_j .$$ 

Consequently,
\begin{align*}
\sum_{4\log_2
\lambda<k<j} 2^{(j+k)(d-1)/2} |\inn{G_j,G_k}| 
&\les \lambda^p \sum_{k> 4\log_2 \lambda} 2^{-k(d-3)/2} \sum_{j\geq 1} 2^{j(d-1)} \# \E_j \\
&\les \sum_{j\geq 1} 2^{j(d-1)} \# \E_j,
\end{align*}
since $p-2(d-3) \leq 0$ if $d\geq 4$ and $p\leq 2$, which is better than we claimed. We continue to estimate $\sum_{j\geq 1} 2^{j(d-1)}\norm{G_j}_2^2$ in the next subsection.

\subsection{The diagonal part}
We shall decompose $I_j$ as a union of intervals of equal length based on the following lemma.
\begin{lem} \label{lem:subint} Let $I$ be a subinterval of  $I_j$. Set $\E_{j,I} = \{(n,z) \in \E_j : n \in I \}$. Then
$$\bnorm{ \sum_{(n,z)\in \E_{j,I} } B_{n,z} }_{2}^2 \les |I| \# \E_{j,I}.$$
\end{lem}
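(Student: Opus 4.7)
The plan is to exploit two basic facts: (i) for each fixed $n$, the convolution operator $f \mapsto K_n * f$ is bounded on $L^2$ with constant $\les 1$, since $\widehat{K_n}(\xi) = \eta(a(\xi)) e^{i n a(\xi)}$ is uniformly bounded so Plancherel applies directly; and (ii) the bump functions $b_{n,z}$ are supported in the pairwise disjoint unit cubes $q_z$, which gives
\[
\bnorm{\sum_{z :(n,z) \in \E_j} b_{n,z}}_2^2 = \sum_{z:(n,z)\in \E_j}\norm{b_{n,z}}_2^2 \leq \#\{z: (n,z)\in \E_j\}
\]
for each fixed $n$. The square-root saving $\sqrt{|I|}$ in the claim will come from a Cauchy--Schwarz--versus--triangle trade-off in the $n$-variable.

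Concretely, I would partition $I$ into $M \les |I|$ disjoint subintervals $I_\alpha$ of length at most $1$, so that each $I_\alpha$ contains at most one natural number $n_\alpha$. Then $\E_{j,I_\alpha} = \{n_\alpha\} \times Z_\alpha$ for some $Z_\alpha \subset \Z^d$, and the triangle inequality across $\alpha$ together with (i) and (ii) yields
\[
\bnorm{\sum_{(n,z)\in \E_{j,I}} B_{n,z}}_2 \leq \sum_\alpha \bnorm{K_{n_\alpha} * \sum_{z\in Z_\alpha} b_{n_\alpha,z}}_2 \les \sum_\alpha \sqrt{\#\E_{j,I_\alpha}}.
\]

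Applying Cauchy--Schwarz across the $M\les |I|$ indices $\alpha$ gives
\[
\sum_\alpha \sqrt{\#\E_{j,I_\alpha}} \leq \sqrt{M}\,\sqrt{\sum_\alpha \#\E_{j,I_\alpha}} \les \sqrt{|I| \cdot \#\E_{j,I}},
\]
and squaring produces the stated inequality. I do not expect any real obstacle here: none of the pointwise kernel estimates from Section \ref{sec:kernel} are needed beyond the trivial $L^\infty$ bound on $\widehat{K_n}$, and the only delicate move is to use Cauchy--Schwarz rather than the triangle inequality across the unit subintervals, which produces the correct factor of $|I|$ instead of the much weaker $|I|^2$ a pure triangle inequality would give.
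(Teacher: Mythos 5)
Your argument is correct and is essentially the paper's own proof: splitting the $n$-sum into the $\les |I|$ integers in $I$ and trading the triangle inequality for Cauchy--Schwarz there, then using Plancherel with $\norm{\widehat{K_n}}_\infty \les 1$ and the disjointness of the supports $q_z$ for fixed $n$, is exactly the computation in the paper (your partition into unit subintervals $I_\alpha$ is just a repackaging of summing over $n\in I\cap\N$). The only cosmetic caveat, shared with the paper, is that the count of integers in $I$ is really $\les 1+|I|$, which is harmless since in the application $|I|\gtrsim 1$.
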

\begin{proof}
By the Cauchy-Schwarz inequality, 
\begin{align*}
\bnorm{ \sum_{(n,z)\in \E_{j,I} } B_{n,z} }_2^2 &= \bnorm{ \sum_{n\in I} K_n *\big(\sumab{z}{(n,z)\in \E_{j,I} } b_{n,z} \big) }_2^2  \\
&\les |I| \sum_{n\in I} \bnorm{K_n * \big(\sumab{z}{(n,z)\in \E_{j,I} } b_{n,z}  \big)}_2^2 \\
&\les |I| \sum_{n\in I} \bnorm{ \sumab{z}{(n,z)\in \E_{j,I} } b_{n,z}  }_2^2 
 \leq |I| \#\E_{j,I}.
\end{align*}
where we have used Plancherel's theorem, $\norm{\widehat{K_n}}_\infty\les 1$ and the orthogonality of $b_{n,z}$ for each fixed $n$ at the last line.
\end{proof}

Using the lemma with $I=I_j$, we can bound $\norm{G_j}_2^2$ by $C2^j \#\E_{j}$, which is satisfactory for the case $2^j\leq \lambda^{2p/(d-1)}$.

Next, we assume that $2^j \geq \lambda^{2p/(d-1)}$ and decompose $I_j$ as a disjoint union of intervals $\{I\}$ of length $\lambda^{2p/(d-1)}$, except possibly with an interval of length less than $\lambda^{2p/(d-1)}$, which can be easily handled separately. Then we get
\begin{align*}
\norm{G_j}_2^2 \les \sum_I \bnorm{\sum_{(n,z)\in \E_{j,I}} B_{n,z} }_2^2 + \sum_{I} \sum_{(n,z)\in \E_{j,I}}  |\inn{
B_{n,z}, \sumab{I'}{I' \neq I}\sum_{(n',z')\in \E_{j,I'}} B_{n',z'}}|.
\end{align*}
Since Lemma \ref{lem:subint} gives the desired estimate for the diagonal part, we turn to the estimate of the scalar products. 

First, we may discard the contribution of $(n',z')$ with $\psi(z'-z) \geq 2^{j+5}$ in the inner product by an argument similar to the proof of \eqref{eqn:innerc1}. Then the remaining part can be bounded by 
\begin{equation}\label{eqn:diag1}
\sum_{(n,z)\in \E_{j}}  \sumab{(n',z')\in \E_{j}}{|I| \leq |(n',z')-(n,z)| \les 2^j } |\inn{
B_{n,z}, B_{n',z'}}|. 
\end{equation}

By \eqref{eqn:innerc0}, we may bound \eqref{eqn:diag1} by
\begin{align*}
\sum_{(n,z)\in \E_{j}}  \sumab{l\geq 0}{2^l|I| \les 2^j} \sumab{(n',z')\in \E_{j}}{ 2^l|I| \leq |(n',z')-(n,z)| < 2^{l+1}|I|} (1+|(n,z)-(n',z')|)^{-(d-1)/2} \\
\les \sum_{(n,z)\in \E_{j}}  \sum_{l\geq 0} \lambda^p [2^{l}|I|]^{-(d-3)/2}  \les \lambda^p |I|^{-(d-3)/2} \# \E_{j} = \lambda^{2p/(d-1)} \# \E_{j} ,
\end{align*}
where we have used \eqref{eqn:ball} and $d\geq 4$. 

We have shown that $\norm{G_j}_2^2 \les \lambda^{2p/(d-1)} \# \E_j$. This finishes the proof of Lemma \ref{lem:mainc}, which completes the proof of Proposition \ref{prop:char}.

\section{Proof of Theorem \ref{thm:thmBMax}}
\label{sec:proofmaximal}
\subsection{Lorentz space estimates}
By real interpolation, we may extend \eqref{eqn:goalc1rescale} to the Lorentz spaces $L^{p,q}$ for $1\leq q\leq \infty$ and $1<p<2(d-1)/(d+1)$. 
We remark that for $p\leq q \leq \infty$, we have 
$$\norm{f}_{L^{p,q}( \R\times \R^d, \nu_d \times m_d)}^p \les 
\int_{\R^d} \norm{f(\cdot,y)}_{L^{p,q}(\nu_d)}^p dy, $$
when $m_d$ is the Lebesgue measure on $\R^d$. Similarly,
\begin{equation} \label{eqn:lorentz}
\norm{\gamma}_{L^{p,q}( \R\times \Z^d, \nu_d \times m_d)}^p \les 
\sum_{z\in\Z^d} \norm{\gamma(\cdot,z)}_{L^{p,q}(\nu_d)}^p,
\end{equation}
when $m_d$ is the counting measure on $\Z^d$. We refer the reader to \cite[Lemma 9.2]{HNS} for the inequalities.

By rescaling, we obtain the Lorentz space version of Proposition \ref{prop:char}.
\begin{prop} \label{prop:charLorentz}
Let $d\geq 4$ and $1< p<\frac{2(d-1)}{d+1}$ and $p\leq q\leq \infty$. Then
\begin{equation*} 
\bnorm{ \int_\R  K_r * f  (r,\cdot) dr }_{L^{p,q}}^p \les
\int_{\R^d} \norm{\frac{f(\cdot,y)}{(1+|\cdot|)^{(d-1)/2}}}_{L^{p,q}(\nu_d)}^p dy.
\end{equation*}
\end{prop}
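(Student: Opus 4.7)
The plan is to derive Proposition~\ref{prop:charLorentz} from the rescaled $L^p$ inequality \eqref{eqn:goalc1rescale} (which is the content of Proposition~\ref{prop:char}, already proved in Sections~\ref{sec:B}--\ref{sec:L2}) by real interpolation, and then to convert the resulting mixed-norm estimate into the desired iterated-norm estimate using the Lorentz-space comparison inequality stated in the paragraph preceding the proposition.

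The first step is to consider the linear operator
$$S : f \longmapsto \int_{\R} K_r * f(r,\cdot)\,(1+|r|)^{(d-1)/2}\,dr,$$
acting on functions $f$ on $\R \times \R^d$. By \eqref{eqn:goalc1rescale}, $S$ maps $L^{p_1}(\nu_d \times m_d)$ boundedly into $L^{p_1}(\R^d)$ for every $1 \leq p_1 < 2(d-1)/(d+1)$. At the endpoint $p_0 = 1$, Minkowski's inequality together with the $L^1$ kernel bound \eqref{eqn:l1} yields
$$\norm{Sf}_1 \leq \int_\R \norm{K_r}_1\,\norm{f(r,\cdot)}_1\,(1+|r|)^{(d-1)/2}\,dr \les \norm{f}_{L^1(\nu_d \times m_d)}.$$
Given $p$ and $q$ as in the statement, I would select $p_1$ with $p < p_1 < 2(d-1)/(d+1)$ and apply the real interpolation theorem for linear operators; the standard identification $(L^{1}(\mu), L^{p_1}(\mu))_{\theta, q} = L^{p,q}(\mu)$ for $1/p = 1 - \theta + \theta/p_1$ produces
$$\norm{Sh}_{L^{p,q}(\R^d)} \les \norm{h}_{L^{p,q}(\nu_d \times m_d)}$$
for every $1 \leq q \leq \infty$.

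The second step is to invoke the mixed-norm inequality stated just before the proposition, namely
$$\norm{h}_{L^{p,q}(\nu_d \times m_d)}^p \les \int_{\R^d} \norm{h(\cdot, y)}_{L^{p,q}(\nu_d)}^p\,dy,$$
valid for $p \leq q \leq \infty$. Chaining this with the interpolation bound gives
$$\norm{Sh}_{L^{p,q}(\R^d)}^p \les \int_{\R^d} \norm{h(\cdot,y)}_{L^{p,q}(\nu_d)}^p\,dy,$$
and substituting $h(r,y) := f(r,y)/(1+|r|)^{(d-1)/2}$ converts $Sh$ into $\int_\R K_r * f(r,\cdot)\,dr$, reproducing the inequality claimed in Proposition~\ref{prop:charLorentz}.

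I do not anticipate any serious obstacle here: the real interpolation step is automatic once Proposition~\ref{prop:char} is in hand, and the mixed-norm comparison is the Lorentz-space Fubini fact already cited from \cite[Lemma 9.2]{HNS}. The only care required is to match the ranges of parameters: since the $L^p$ estimate holds on an open interval of exponents strictly containing any target $p < 2(d-1)/(d+1)$, one can always choose $p_1 \in (p, 2(d-1)/(d+1))$ to run the interpolation, while the side-condition $p \leq q$ appearing in the proposition is exactly what the comparison inequality demands.
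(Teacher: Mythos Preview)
Your proposal is correct and follows essentially the same route as the paper: interpolate the rescaled $L^p$ inequality \eqref{eqn:goalc1rescale} (which holds on an open range of exponents) to obtain $L^{p,q}$ bounds for the operator $S$, then apply the mixed-norm Lorentz comparison from \cite[Lemma 9.2]{HNS}, and finally undo the rescaling by substituting $h(r,y)=f(r,y)(1+|r|)^{-(d-1)/2}$. The separate verification of the $L^1$ endpoint is not needed, since Proposition~\ref{prop:char} already covers $p=1$ and one may interpolate between any two exponents $p_0<p<p_1$ in the open range; but this redundancy does no harm.
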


\subsection{Proof of Theorem \ref{thm:thmBMax}} \label{sec:proofmax}
By the Littlewood-Paley theory and duality, it suffices to prove that
\begin{equation}\label{eqn:golHNS}
\norm{\int_I T_{m(\cdot/t)} g_t dt}_{L^{p,q}} \les \norm{\frac{\widehat{m}}{(1+|\cdot|)^{(d-1)/2}}}_{L^{p,q}(\nu_d)} \norm{\int_I |g_t| dt}_{L^p},
\end{equation}
with $I=[1,2]$. We refer the reader to \cite{LRS} for details.

Next, we proceed as in \cite{KimMax}, assuming that $m$ is supported in $[1/2,2]$. However, an inspection of the proof would clearly show that the result holds whenever $m$ is compactly supported in $(0,\infty)$. We see (as in Section \ref{sec:introc}) that
$$\int_I T_{m(\cdot/t)} g_t dt = \int K_r * f(r,\cdot) dr,$$ 
where $f(r,y) = \int_I t\widehat{m}(tr) g_t(y) dt$. Observe that 
$$ \norm{\frac{f(\cdot,y)}{(1+|\cdot|)^{(d-1)/2}}}_{L^{p,q}(\nu_d)} \les \norm{\frac{\widehat{m}}{(1+|\cdot|)^{(d-1)/2}}}_{L^{p,q}(\nu_d)} \int_I  |g_t(y)| dt$$
by Minkowski's integral inequality for Lorentz spaces and the change of variable $r\to r/t$. Therefore, Proposition \ref{prop:charLorentz} implies \eqref{eqn:golHNS}. 

\section{Proof of Theorem \ref{thm:B}} \label{sec:proofmultiplier}
\subsection{Independence of the cutoff functions}
In the following subsections, we will prove Theorem \ref{thm:B} for a particular cutoff function. It is justified by the following lemma \cite[Lemma 2.4]{GS}.
\begin{lem} Let $1<p<\infty$ and $1\leq q\leq\infty$. Suppose that $\phi$ and $\eta$ are nontrivial smooth functions with compact supports in $(0,\infty)$. Then
\begin{align}\label{eqn:ind1}
\sup_{t>0} \norm{\frac{ \ft_\R[m(t\cdot)\phi] }{(1+|\cdot|)^{(d-1)/2}}}_{L^{p,q}(\nu_d)} 
\simeq \sup_{t>0}  \norm{\frac{ \ft_\R[m(t\cdot)\eta]}{(1+|\cdot|)^{(d-1)/2}}}_{L^{p,q}(\nu_d)}.
\end{align} 
where the implicit constant is independent of $m$.
\end{lem}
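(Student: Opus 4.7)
My plan is to prove $\les$ in \eqref{eqn:ind1}, the reverse inequality following by symmetry. The approach is to decompose $\phi$ as a finite sum of products of smooth cutoffs with dilates of $\eta$, so that on the Fourier side $\ft_\R[m(t\cdot)\phi]$ becomes a finite sum of convolutions of $\ft_\R[m(t\cdot)\eta(\cdot/s_i)]$ with Schwartz functions. The three ingredients are a partition of unity, a scaling identity, and a convolution estimate on the weighted Lorentz space.

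Since $\eta$ is nontrivial smooth with compact support in $(0,\infty)$, there is an open interval $J\subset(0,\infty)$ on which $|\eta|\geq c>0$, so I can cover the compact set $\supp\phi$ by finitely many dilates $s_1 J,\dots, s_N J$ with $s_i$ in a bounded range $[s_{\min},s_{\max}]\subset(0,\infty)$. Taking a smooth partition of unity $\{\chi_i\}$ on $\supp\phi$ subordinate to this cover and setting $\tilde\phi_i := \phi\chi_i/\eta(\cdot/s_i)\in C_c^\infty((0,\infty))$, I obtain $\phi = \sum_{i=1}^N \tilde\phi_i \cdot \eta(\cdot/s_i)$, hence
\[
\ft_\R[m(t\cdot)\phi] = \sum_{i=1}^N \ft_\R[m(t\cdot)\eta(\cdot/s_i)] * \ft_\R[\tilde\phi_i].
\]

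For each $i$, the dilation identity $\ft_\R[m(t\cdot)\eta(\cdot/s)](r) = s\,\ft_\R[m(ts\cdot)\eta](sr)$ and a change of variables $r\mapsto r/s$ in the weighted Lorentz norm, with $s_i$ in a bounded subset of $(0,\infty)$, will yield
\[
\bnorm{\frac{\ft_\R[m(t\cdot)\eta(\cdot/s_i)]}{(1+|\cdot|)^{(d-1)/2}}}_{L^{p,q}(\nu_d)} \les \sup_{t'>0}\bnorm{\frac{\ft_\R[m(t'\cdot)\eta]}{(1+|\cdot|)^{(d-1)/2}}}_{L^{p,q}(\nu_d)},
\]
with implicit constant depending on $\phi$ and $\eta$ but not on $m$ or $t$. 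The convolution with the Schwartz function $\ft_\R[\tilde\phi_i]$ will be controlled via the pointwise bound $1+|r|\leq(1+|r-u|)(1+|u|)$, which gives
\[
\frac{|f*g(r)|}{(1+|r|)^{(d-1)/2}} \leq \Bigl(\frac{|f|}{(1+|\cdot|)^{(d-1)/2}}\Bigr) * \Bigl(|g|(1+|\cdot|)^{(d-1)/2}\Bigr)(r);
\]
after identifying the $L^{p,q}(\nu_d)$-norm with a Lorentz norm against Lebesgue measure, Young's inequality in Lorentz spaces reduces the estimate to $\norm{(1+|\cdot|)^{(d-1)/2}\ft_\R[\tilde\phi_i]}_{L^1}<\infty$.

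The hardest step will be the Young-type convolution estimate on the weighted space $L^{p,q}(\nu_d)$: since the measure $\nu_d$ is not translation invariant, transferring the pointwise inequality above to a norm inequality requires identifying the $L^{p,q}(\nu_d)$-norm with its embedded weight $(1+|\cdot|)^{-(d-1)/2}$ as a Lorentz norm on Lebesgue measure, so that the standard Young's inequality in Lorentz spaces applies; this is exactly why the normalization $(1+|\cdot|)^{(d-1)/2}$ pairs correctly with the density $(1+|\cdot|)^{d-1}$ of $\nu_d$. Once this convolution bound is in place, summing over $i=1,\dots,N$ and taking the supremum over $t>0$ on both sides completes the proof.
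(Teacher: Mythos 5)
The overall skeleton of your argument is sound, and it is essentially the route behind the result the paper quotes (the paper itself gives no proof, citing \cite[Lemma 2.4]{GS}): covering $\supp\phi$ by dilates $s_iJ$ on which $\eta(\cdot/s_i)$ is bounded below, writing $\phi=\sum_i\tilde\phi_i\,\eta(\cdot/s_i)$ with $\tilde\phi_i\in C_c^\infty((0,\infty))$, using the dilation identity with $s_i$ in a compact subset of $(0,\infty)$, and the Peetre-type pointwise bound are all correct. The genuine gap is in your last step. First, the proposed identification of $\norm{h}_{L^{p,q}(\nu_d)}$ with a Lorentz norm of $h(1+|\cdot|)^{(d-1)/p}$ against Lebesgue measure is valid only for $q=p$; for $q\neq p$ it fails with unbounded constants. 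For instance, with $a=d-1$ and $h=\sum_{k=1}^K(1+2^k)^{-a/p}\chi_{[2^k,2^k+1]}$ one has $\norm{h}_{L^{p,\infty}(\nu_d)}\simeq 1$ while $\norm{h\,(1+|\cdot|)^{a/p}}_{L^{p,\infty}(dr)}\simeq K^{1/p}$, so no such identification with constants independent of the function exists, and the lemma is needed precisely for all $p\le q\le\infty$. Second, even granting some transfer, the reduction to $\norm{(1+|\cdot|)^{(d-1)/2}\ft_\R[\tilde\phi_i]}_{L^1}<\infty$ via ``standard Young's inequality'' cannot work: $\nu_d$ is not translation invariant, and convolving $\chi_{[0,1]}$ with an approximate identity centered at a large point $R$ increases the $L^p(\nu_d)$-norm by a factor $\simeq R^{(d-1)/p}$, so membership of the second factor in $L^1$ alone is not sufficient and plain Young does not apply on this measure space.

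The estimate you need is nevertheless true for your kernels, because $G_i:=|\ft_\R[\tilde\phi_i]|\,(1+|\cdot|)^{(d-1)/2}$ decays rapidly, and the standard repair is interpolation rather than Young. Using $G_i(u)\les_N(1+|u|)^{-N}$ and $(1+|r|)^{d-1}\les(1+|u|)^{d-1}(1+|r-u|)^{d-1}$, Fubini gives $\norm{h*G_i}_{L^1(\nu_d)}\les\norm{h}_{L^1(\nu_d)}$ (the relevant finite quantity being $\int G_i(v)(1+|v|)^{d-1}dv$, not just $\norm{G_i}_{L^1}$), and trivially $\norm{h*G_i}_{L^\infty}\les\norm{h}_{L^\infty}$. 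Since $h\mapsto h*G_i$ is linear, real interpolation between these endpoints yields boundedness on $L^{p,q}(\nu_d)$ for all $1<p<\infty$, $1\leq q\leq\infty$, with constants depending only on $\tilde\phi_i$ (hence on $\phi,\eta$) and not on $m$ or $t$. With this replacement of your final step, the decomposition, the dilation identity, and the summation over the finitely many $i$ complete the proof as you outlined.
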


\subsection{Combining Fourier localized pieces}
Let $\phi$ be a smooth cut off function supported in $[1/2,2]$ such that 
$\sum_{k\in \Z} \phi(2^{-k}\rho)^2 = 1$ for $\rho>0$. Let $m_k = m(2^k \cdot)\phi $, $\kappa_k = \widehat{m_k}$,
$\widehat{\K^k}(\xi) = m_k(a(\xi))$, and $\eta_k = 2^{kd} \eta(2^k \cdot)$,
where $\hat{\eta}(\xi) = \phi(a(\xi))$. Then we can write
$$T_mf = \sum_{k\in \Z} \eta_k * T_kf,$$
where 
$T_k f = 2^{kd} \K^k(2^k \cdot) * f.$

With the above notations, we quote a special case of \cite[Theorem 4.2]{LRS}.
\begin{theorem} \label{thm:com} Let $1<p<2$, $p\leq q \leq \infty$, $A>0$, and $C_0\geq 1$. Assume that for every $k\in \Z$ and $l\geq 0$ one can split the kernel into
$$\K^k = \K^{k,sh}_l +\K^{k,lg}_l$$
so that the following properties hold:
\ben
 \item $\K^{k,sh}_l$ is supported in $\{x:|x| \leq C_0 2^{l+1}\}$. 
 \item $\sup_{\xi\in \R^d} \big|\widehat{\K^{k,sh}_l}(\xi)\big| \leq A$.
 \item Let $\{b_z\}_{z\in \Z^d}$ such that $\supp
b_z \subset R^l_z := \prod_{i=1}^d \big[ 2^l z_i, 2^l(z_i+1) \big)$ and
$\norm{b_z}_{L^2} \leq 1$. There is $\epsilon>0$ such that for any
$\gamma \in l^p(\Z^d)$,
\begin{equation} \label{eqn:goalgeneral}
 \bnorm{\K^{k,lg}_l * \big(\sum_z \gamma_z b_z \big)}_{L^{p,q}} \leq A
2^{-l\epsilon}2^{ld(\frac{1}{p}-\frac{1}{2})} \norm{\gamma}_{l^p(\Z^d)}.
\end{equation}
\een
Then the operator $T_m$ extends to a bounded operator from $L^p$ to $L^{p,q}$ with the operator norm not exceeding $C_p A$ for some constant $C_p$.
\end{theorem}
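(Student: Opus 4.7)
The plan is to follow the restricted weak type philosophy used throughout this paper. By real interpolation, it suffices to prove a restricted weak type $(p,p)$ inequality for $T_m$; indeed, hypothesis (ii) combined with the Littlewood-Paley synthesis $T_m f = \sum_k \eta_k * T_k f$ and Plancherel's theorem yields $\norm{T_m}_{L^2\to L^2} \les A$, and $L^{p,q}$ for $p\leq q\leq\infty$ arises as a real interpolation space between a restricted weak type $(p_0,p_0)$ space at some $p_0<p$ and $L^2$. Once the restricted weak type bound is established with constant proportional to $A$, the Lorentz space estimate follows at once.

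To prove the restricted weak type bound at level $\lambda$, I would apply a Calder\'on-Zygmund decomposition to $\chi_E$ at height $\lambda$: this produces disjoint dyadic cubes $\{Q\}$ with $\chi_E = g + \sum_Q b_Q$, where $b_Q$ is supported in $Q$ with mean zero and $\norm{b_Q}_\infty \les 1$, while $\norm{g}_2^2 \les \lambda|E|$ and $\sum_Q|Q|\les\lambda^{-p}|E|$. The $L^2$ bound together with Chebyshev controls $T_m g$. A dilate $Q^* = C_0 Q$ of each cube forms an exceptional set $\bigcup_Q Q^*$ of measure $\les \lambda^{-p}|E|$ on which the short-range contributions can be discarded: by hypothesis (i), $\K^{k,sh}_l * b_Q$ vanishes outside $Q^*$ when $l$ matches $l(Q)$.

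Outside the exceptional set, only the long-range kernels contribute. For each scale $l \geq 0$, gather the CZ atoms of side $2^l$: setting $b_z^{(l)} = b_Q/|Q|^{1/2}$ (so that $\norm{b_z^{(l)}}_{L^2}\leq 1$ with support in $R_z^l$) and $\gamma_z^{(l)} = |Q|^{1/2}$ if a CZ cube occupies $R_z^l$ and zero otherwise, hypothesis (iii) gives for each $k$
\[
\bnorm{\K^{k,lg}_l * \sum_z \gamma_z^{(l)} b_z^{(l)}}_{L^{p,q}} \les A\,2^{-l\epsilon}\,2^{ld(1/p-1/2)} \norm{\gamma^{(l)}}_{\ell^p}.
\]
A vector-valued orthogonality argument, exploiting the frequency disjointness of the shells $\{a(\xi)\sim 2^{-k}\}$, then sums these estimates over $k$. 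Since $\norm{\gamma^{(l)}}_{\ell^p}^p = 2^{pld/2}\#\{Q:l(Q)=2^l\}$ and $2^{ld}\#\{Q:l(Q)=2^l\} \les \lambda^{-p}|E|$, the right-hand side becomes $\les A\,2^{-l\epsilon} \lambda^{-1}|E|^{1/p}$, which sums geometrically in $l$ by the $\epsilon$-gain.

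The main obstacle is the interaction between the Littlewood-Paley scale $k$ and the spatial scale $l$: hypothesis (iii) is stated for a single $k$, so summing across $k$ requires a vector-valued strengthening obtained from Fourier disjointness together with a Khintchine-type randomization. One must also exploit the mean-zero cancellation of the CZ atoms against the smooth envelopes $\eta_k$, in order to gain rapid decay in the interaction between Littlewood-Paley scales $2^{-k}$ and atomic scales $2^l$ of very different size; this is where the use of the smooth frequency projections, rather than sharp ones, becomes essential.
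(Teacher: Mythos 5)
There is a genuine gap, in fact two. First, your opening reduction is not valid for an endpoint statement of this kind. Theorem \ref{thm:com} asserts an $L^p\to L^{p,q}$ bound (including the strong type case $q=p$) at the \emph{same} fixed $p$ at which hypothesis \eqref{eqn:goalgeneral} is assumed. A restricted weak type $(p,p)$ bound interpolated with an $L^2$ bound only yields conclusions for exponents strictly between $p$ and $2$, and your alternative of interpolating from a restricted weak type bound at some $p_0<p$ is unavailable because hypothesis (iii) is given only at $p$ (the factor $2^{ld(\frac1p-\frac12)}$ and the $L^{p,q}$ norm are tied to that exponent). This is different from the situation in Proposition \ref{prop:char}, where the reduction to the restricted weak type inequality \eqref{eqn:goalc4} is legitimate because the estimate is proved for an open range of $p$ together with the trivial $p=1$ case via \eqref{eqn:l1}; no such range or $L^1$ ingredient is available from (i)--(iii). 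Moreover, the $L^2$ bound $\norm{T_m}_{L^2\to L^2}\les A$ does not follow from (ii) alone: (ii) controls only $\widehat{\K^{k,sh}_l}$, and nothing in the hypotheses bounds $\sup_\xi|\widehat{\K^{k,lg}_l}(\xi)|$. Within your scheme there is also an internal problem: a Calder\'on--Zygmund decomposition of $\chi_E$ at a height comparable to $\lambda$ (or $\lambda^p$) produces no bad cubes once $\lambda$ exceeds a constant, since $\norm{\chi_E}_\infty=1$, so in the main regime your argument collapses to the (unproven) $L^2$ bound; and the mean-zero property of CZ atoms cannot be exploited, because no smoothness or decay of $\K^{k,lg}_l$ is assumed beyond \eqref{eqn:goalgeneral}.

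Second, the step you yourself flag as ``the main obstacle''---summing over the Littlewood--Paley index $k$---is precisely the content of the theorem, and it is left as a hand-wave. Note that the paper does not prove this statement; it quotes it as a special case of Theorem 4.2 of Lee--Rogers--Seeger \cite{LRS}. The actual proof there does not decompose a characteristic function: it decomposes $f$ itself, jointly in frequency and space, by a Chang--Fefferman type atomic decomposition adapted to the Littlewood--Paley square function. Atoms at spatial scale $2^l$ are $L^2$-normalized (which is exactly why (iii) is phrased with $\norm{b_z}_{L^2}\leq 1$); the short-range convolutions $\K^{k,sh}_l*b_z$ stay, by (i), inside a bounded dilate of an exceptional set of controlled measure, where they are handled by (ii), Plancherel and a local Cauchy--Schwarz/Tchebyshev argument; the long-range contributions are estimated by \eqref{eqn:goalgeneral} for each $k$ and then summed using an inequality of the form $\bignorm{\sum_k g_k}_{L^{p,q}}^p\les\sum_k\norm{g_k}_{L^{p,q}}^p$ for functions whose Fourier supports lie in disjoint dyadic shells (valid for $p<2$), with the coefficients' $\ell^p$ sums over $k$, $l$ and $z$ controlled by $\norm{f}_p^p$ through the square function. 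Your proposal has no mechanism that couples the CZ cubes of $\chi_E$ to the $k$-dependent pieces, and the ``vector-valued strengthening via Khintchine'' you invoke is not a routine fix but the missing core of the argument; as written, the proof does not go through.
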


Observe that $$\K^k(x) = \int K_r(x) \kappa_k(r) dr.$$ 
We split $\K^k$ as follows. Fix $l_0\in \N$ such that $2^{l_0} \geq d \sum_{\pm} \norm{\nabla \psi_\pm}_{L^\infty}$ and let $C_0=2^{l_0+1} c_1^{-1}$ where
$c_1$ is the constant such that $|\psi_\pm(x)| \geq c_1|x|$ (See Section
\ref{sec:kernel}). Let $\chi$ be a smooth function supported in
$|x|\leq 2C_0$ such that $\chi(x) = 1$ if $|x|\leq C_0$, and let $\chi_l(x) =
\chi(2^{-l} x)$. For each $l\geq 0$, we split the kernel $\K^k$ into a short
and a long range contribution $$\K^k = \K^{k,sh}_l + \big[\K^{k,lg}_l +
E^{k,lg}_l \big],$$
where
\begin{align*}
 \K^{k,sh}_l(x) &= \chi_l(x) \int_{|r|\leq 2^{l+l_0}} K_r(x)\kappa_k(r)  dr, \\
 \K^{k,lg}_l(x) &= \int_{|r| > 2^{l+l_0}} K_r(x) \kappa_k(r) dr, \\
 E^{k,lg}_l(x) &= [1-\chi_l(x)] \int_{|r|\leq 2^{l+l_0}}  K_r(x) \kappa_k(r) dr.
\end{align*}

Note that $(i)$ in Theorem \ref{thm:com} is immediate from the definition. Let 
\begin{align*}
A = C \sup_{k\in \Z} \norm{\frac{ \kappa_k }{(1+|\cdot|)^{(d-1)/2}}} _{L^{p,q}(\nu_d)}.
\end{align*}
for some large constants $C>0$. 

By Theorem \ref{thm:com}, it suffices to verify, for the proof of Theorem \ref{thm:B}, $(ii)$ and 
\begin{align} \label{eqn:goalgeneralmain}
 \bnorm{\K^{k,lg}_l * \big(\sum_z \gamma_z b_z \big)}_{L^{p,q}} &\leq A
2^{-l\epsilon}2^{ld(\frac{1}{p}-\frac{1}{2})} \norm{\gamma}_{l^p(\Z^d)}\\  \label{eqn:goalgeneralerror}
 \bnorm{E^{k,lg}_l * \big(\sum_z \gamma_z b_z \big)}_{L^{p}} &\leq A
2^{-l\epsilon}2^{ld(\frac{1}{p}-\frac{1}{2})} \norm{\gamma}_{l^p(\Z^d)},
\end{align}
for some $\epsilon>0$ for the range of $p$ given in Theorem \ref{thm:B}. 
In this subsection, we verify $(ii)$ and \eqref{eqn:goalgeneralerror} for $1< p<\frac{2d}{d+1}$. Verification of \eqref{eqn:goalgeneralmain} will be done in the following subsection.

For $(ii)$, observe that 
\begin{align*}
\norm{\widehat{\K^{k,sh}_l}}_{L^\infty} &\les \int |\kappa_k(r)| dr 
= \int \frac{|\kappa_k(r)|}{(1+|r|)^{(d-1)/2}} (1+|r|)^{-(d-1)/2} d\nu_d(r) \\
&\les \norm{\frac{\kappa_k}{(1+|\cdot|)^{(d-1)/2}}}_{L^{p,\infty}(\nu_d)} \norm{(1+|\cdot|)^{-(d-1)/2}}_{L^{p',1}(\nu_d)} \\
&\les \norm{\frac{\kappa_k}{(1+|\cdot|)^{(d-1)/2}}}_{L^{p,\infty}(\nu_d)} \leq A.
\end{align*}
In the last line, we have used that $p<2d/(d+1)$.

For \eqref{eqn:goalgeneralerror}, note that if $|r|\leq 2^{l+l_0}$, then
$$|(1-\chi_l(x)) K_r(x)| \leq C_N 2^{-lN} (1+|x|)^{-N},$$
since $|\psi_\pm(x)+r| \geq |\psi_\pm(x)|/2 \geq 2^{l+l_0}$ if $|x| \geq C_0 2^l$. This implies that 
$$\norm{E^{k,lg}_l}_{L^1}  \les 2^{-lN} \int |\kappa_k(r)| dr.$$

Then we have by the normalization of $b_z$ and the calculation for (ii), 
\begin{align*}
 \bnorm{E^{k,lg}_l * \big(\sum_z \gamma_z b_z \big)}_{L^{p}} &\leq \norm{E^{k,lg}_l}_{L^1} \bnorm{\sum_z \gamma_z b_z }_{L^p} \\
 &\les 2^{-lN} \int |\kappa_k(r)| dr \big( \sum_{z}|\gamma_z|^p\norm{b_z}_{L^p}^p \big)^{1/p} \\
  &\les 2^{-lN} 2^{ld(\frac{1}{p}-\frac{1}{2})} \int |\kappa_k(r)| dr \norm{\gamma}_{l^p(\Z^d)}\\
  &\leq 2^{-lN} 2^{ld(\frac{1}{p}-\frac{1}{2})}  A\norm{\gamma}_{l^p(\Z^d)}.
\end{align*}

\subsection{Proof of \eqref{eqn:goalgeneralmain}}
We need to show that there is $\epsilon>0$ such that
\begin{equation} \label{eqn:goalgeneralmainc}
 \bnorm{\int_{|r|\geq 2^{l+l_0}} K_r*\big(\sum_z \gamma_z b_z \big) \kappa_k(r) dr}_{L^{p,q}} \leq 
A 2^{-l\epsilon} 2^{ld(\frac{1}{p}-\frac{1}{2})} \norm{\gamma}_{l^p(\Z^d)}
\end{equation}
for $1<p<\frac{2(d-1)}{d+1}$ and $p\leq q \leq \infty$.

Let $\mu_d$ be the product measure $\nu_d \times m_d$ on $\R\times \Z^d$, where $m_d$ is the counting measure. Let $S$ be the operator acting on functions on $\R \times \Z^d$ by
$$S \gamma(x) = \sum_{z\in\Z^d} \int_{|r|\geq 2^{l+l_0}} \gamma(r,z) K_r*b_z(x) (1+|r|)^{(d-1)/2} dr.$$ 
Then \eqref{eqn:goalgeneralmainc} is a direct consequence of \eqref{eqn:lorentz} and the following Proposition with the choice of $$\gamma(r,z) = (1+|r|)^{-(d-1)/2} \kappa_k(r) \chi_{|r|\geq 2^{l+l_0}}(r) \gamma_z.$$

\begin{prop} \label{prop:globcpro} Let $d\geq 4$, $1< p<\frac{2(d-1)}{d+1}$ and $1 \leq q \leq \infty$. Then
$$\norm{S\gamma}_{L^{p,q}(\R^d)} \les 2^{-l\epsilon} 2^{ld(\frac{1}{p}-\frac{1}{2})} \norm{ \gamma}_{L^{p,q}(\mu_d)}. $$
\end{prop}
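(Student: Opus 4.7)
The plan is to adapt the restricted weak type framework of Section~\ref{sec:B} to the present setting. By real interpolation on the Lorentz scale, it suffices to prove the restricted weak type endpoint: for a finite set $E\subset\N\times\Z^d$ (after discretizing $r\mapsto n+u$ as in Section~\ref{sec:redc}),
\begin{equation*}
\meas\{x:|S\chi_E(x)|>\lambda\}\les \lambda^{-p}2^{-lp\epsilon}2^{ldp(1/p-1/2)}\mu_d(E).
\end{equation*}
The cutoff $|r|\ges 2^{l+l_0}$ restricts the dyadic decomposition $n\in I_j=[2^j,2^{j+1})$ to $j\ges l+l_0$.

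Setting $\E_j=E\cap(I_j\times\Z^d)$ and $B_z:=K_n*b_z$ for $(n,z)\in\E_j$, I would run the density decomposition of Section~2.3 using dyadic cubes of $\R^{1+d}$ with side length between $2^l$ and $2^j$ (the new minimum scale $2^l$ matches the support of $b_z$). The high-density contribution is handled as in Section~2.4: by \eqref{eqn:ker} and the support of $b_z$, each $B_z$ is essentially localized to a $2^l$-thickening of the shell $\{\psi(x-y_z)=n\}$, whose measure, summed over the selected cubes, yields the desired bound.

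For the low-density part the crux is an $L^2$ estimate in the spirit of Lemma~\ref{lem:mainc}. Scalar products $\inn{B_z,B_{z'}}$ are computed via Plancherel and \eqref{eqn:boundI}, with the $L^2$-normalization $\|b_z\|_2\le1$ playing the role of the pointwise bound $|b_{n,z}|\le\chi_{q_z}$; this introduces a $2^{ld}$-type loss that is absorbed into the allowed factor $2^{ldp(1/p-1/2)}$. In the analog of \eqref{eqn:countingballs} the scale $2^l$ enters, and the geometric series in the dyadic scale $k$ of the density decomposition (cf.\ end of Section~\ref{sec:scalar}) now starts from $k\ges l$ rather than $k\ges 1$. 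Since $d\ge4$ and $p<2(d-1)/(d+1)$ strictly, the original series converges with a strictly negative exponent in $k$, so the new lower bound $k\ges l$ contributes the required decay $2^{-l\epsilon}$ with $\epsilon>0$ depending on the gap between $p$ and $2(d-1)/(d+1)$.

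The main obstacle is the careful bookkeeping in this $L^2$ step: one needs to propagate the scale $2^l$ through the ball-counting estimate \eqref{eqn:ball}, through the scalar product estimates (now using $L^2$-orthogonality of the $b_z$ at fixed $n$ in place of the disjointness of unit-scale supports in Section~\ref{sec:B}), and through the analog of Lemma~\ref{lem:subint}, and then to reassemble the terms so that the product $2^{-l\epsilon}2^{ldp(1/p-1/2)}$ emerges with the correct powers.
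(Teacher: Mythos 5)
There is a genuine gap: your proposal misidentifies the mechanism that produces the factor $2^{-l\epsilon}$, and without the correct mechanism the argument fails near $p=1$. First, the dominant term in the $L^2$ estimate of Lemma \ref{lem:mainc} is the diagonal contribution $\sum_j 2^{j(d-1)}\norm{G_j}_2^2$, which contains no geometric series in the density-decomposition scale at all; the convergent series $\sum_k 2^{-k(d-3)/2}$ occurs only in the off-diagonal terms $\inn{G_j,G_k}$, $k<j$, where $k$ indexes dyadic blocks of the $r$-variable and is restricted by $k>4\log_2\lambda$, not by $l$. So restricting all scales to be $\gtrsim 2^l$ (equivalently $j\gtrsim l$, forced by the cutoff $|r|\geq 2^{l+l_0}$) does not by itself yield any decay in $l$ for the main term. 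Second, a bookkeeping check at the $L^1$ end shows no room for the gain by pure rescaling: with $\norm{b_z}_{L^2}\leq 1$ on a cube of side $2^l$ one only has $\norm{b_z}_{L^1}\leq 2^{ld/2}$, and the trivial bound $\norm{K_r*b_z}_{L^1}\leq\norm{K_r}_{L^1}\norm{b_z}_{L^1}\les (1+|r|)^{(d-1)/2}2^{ld/2}$ exactly saturates the allowed factor $2^{ld(\frac1p-\frac12)}$ as $p\to 1$. Hence the $\epsilon$-gain must come from genuinely new information, namely the geometric fact that for $|r|\geq 2^{l+l_0}$ the function $K_r*b_z$ is essentially supported on a shell $\{x:|\psi(x-c_Q)-r|\les 2^{l}\}$ of measure $O(|r|^{d-1}2^l)$, so that Cauchy--Schwarz and Plancherel give $\norm{K_r*b_z}_{L^1}\les (1+|r|)^{(d-1)/2}2^{l/2}\norm{b_z}_{L^2}$ (Lemma \ref{lem:L1estimate}), an improvement by $2^{l(d-1)/2}$. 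You mention the shell localization only in the high-density part of your scheme, where it is not the issue; you never exploit it where the gain is actually needed.

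Once that $L^1$ estimate is in hand, the paper's proof is much shorter than what you propose: there is no need to rerun the discretization, density decomposition, or a new $L^2$ lemma at scale $2^l$. One simply proves \eqref{eqn:gloc1}, i.e. $\norm{S\gamma}_{L^1}\les 2^{l/2}\norm{\gamma}_{L^1(\mu_d)}$, by the triangle inequality and Lemma \ref{lem:L1estimate}, and \eqref{eqn:glocp}, i.e. $\norm{S\gamma}_{L^{p_1}}\les 2^{ld(\frac1{p_1}-\frac12)}\norm{\gamma}_{L^{p_1}(\mu_d)}$ for some $p<p_1<\frac{2(d-1)}{d+1}$, as a direct consequence of the already established model inequality \eqref{eqn:goalc1rescale} applied to $f(r,y)=\chi_{\{|r|\geq 2^{l+l_0}\}}(r)\sum_z\gamma(r,z)b_z(y)$ together with H\"older's inequality on the cubes $R^l_z$. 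Real interpolation of these two bounds gives, at the intermediate exponent $p$, the constant $2^{l(\theta d(\frac1{p_1}-\frac12)+(1-\theta)\frac12)}$ with $\frac1p=\frac{\theta}{p_1}+(1-\theta)$, which is strictly smaller than $2^{ld(\frac1p-\frac12)}$ because the $L^1$ constant $2^{l/2}$ beats $2^{ld/2}$; this is exactly where $\epsilon>0$ comes from, and it also yields all Lorentz exponents $1\leq q\leq\infty$ at once. If you wish to salvage your route, you would have to import the shell-localization/Cauchy--Schwarz input into your low-density $L^2$ analysis and into the $L^1$-type estimates, at which point you would essentially be reproving Lemma \ref{lem:L1estimate}; as written, the proposal does not close.
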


For the proof of Proposition \ref{prop:globcpro}, we need the following lemma.
\begin{lem} \label{lem:L1estimate} Let $b\in L^2(\R^d)$ be a function supported in a cube $Q$ of side length $2^l$. If $|r|\geq 2^{l+l_0}$, then 
$$\norm{K_r*b}_{L^1} \les (1+|r|)^{(d-1)/2} 2^{l/2} \norm{b}_{L^2}.$$
\end{lem}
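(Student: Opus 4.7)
The plan is to split $\R^d = E \cup E^c$, where $E$ is the thickened shell on which the mass of $K_r \ast b$ essentially lives, and treat each piece differently: Cauchy--Schwarz plus Plancherel on $E$ (which has small measure) and pointwise rapid decay on $E^c$. By translation invariance we may assume $Q$ is centered at the origin. Fix a large constant $M \geq 2\sqrt{d}\,\|\nabla \psi\|_{L^\infty}$ and set
\[
E = \{x \in \R^d : |\psi(x) - r| \leq M 2^l\}.
\]
Since $\psi$ is homogeneous of degree $1$, smooth on $\R^d\setminus 0$, and satisfies $\psi(x) \geq c_1|x|$ (see Section \ref{sec:kernel}), the set $E = r\cdot\{z : |\psi(z)-1| \leq M 2^l/|r|\}$ is a thin shell of thickness $\sim 2^l/|r|$ around the compact smooth hypersurface $\{\psi = 1\}$, scaled by $r$. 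Because $|r| \geq 2^{l+l_0}$, the ratio $2^l/|r|$ is bounded, and consequently $|E| \lesssim (1+|r|)^{d-1} 2^l$.

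On $E$ I apply Cauchy--Schwarz. Since $\widehat{K_r}(\xi) = \eta(a(\xi))e^{ira(\xi)}$ is uniformly bounded, Plancherel's theorem gives $\|K_r \ast b\|_{L^2} \lesssim \|b\|_{L^2}$, so
\[
\|K_r\ast b\|_{L^1(E)} \leq |E|^{1/2} \|K_r\ast b\|_{L^2}
\lesssim (1+|r|)^{(d-1)/2} 2^{l/2} \|b\|_{L^2},
\]
which is exactly the desired bound. On $E^c$ I use the pointwise estimate \eqref{eqn:ker}. For any $y\in Q$ we have $|y|\lesssim 2^l$, hence $|\psi(x-y)-\psi(x)|\leq \|\nabla\psi\|_{L^\infty}\cdot O(2^l)$, and by the choice of $M$, for $x \in E^c$ the quantity $|\psi(x-y)-r|$ is comparable to $s(x):=|\psi(x)-r|$ uniformly in $y \in Q$. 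Combined with $\|b\|_{L^1} \leq |Q|^{1/2}\|b\|_{L^2} \lesssim 2^{ld/2}\|b\|_{L^2}$, this yields
\[
|K_r\ast b(x)| \lesssim (1+|r|)^{-(d-1)/2}(1+s(x))^{-N}\,\|b\|_{L^1}, \qquad x\in E^c.
\]

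Finally, I integrate this pointwise bound over $E^c$. Since the level sets $\{s(x)=t\}$ have measure $\lesssim (1+|r|)^{d-1}$ for $t \lesssim |r|$ and $\lesssim t^{d-1}$ for $t \gtrsim |r|$, a direct computation gives
\[
\int_{E^c}(1+s(x))^{-N}\, dx \lesssim (1+|r|)^{d-1}\, 2^{l(1-N)}
\]
for $N$ sufficiently large. Choosing $N$ so large that $ld/2 + l(1-N) \leq l/2$ absorbs the factor $2^{ld/2}$ coming from $\|b\|_{L^1}$ and produces a contribution $\lesssim (1+|r|)^{(d-1)/2} 2^{l/2}\|b\|_{L^2}$ on $E^c$, matching the target. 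The only delicate point is verifying that the essential-support argument still gives the sharp exponent $2^{l/2}$ rather than the trivial $2^{ld/2}$ from Minkowski's inequality; this is precisely where the thinness $\sim 2^l$ of the shell in the direction transverse to $\{\psi=r\}$ is exploited.
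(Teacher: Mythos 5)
Your argument is correct and is essentially the paper's own proof: you split into the shell $\{|\psi(x-c_Q)-r|\lesssim 2^l\}$, where Cauchy--Schwarz with the measure bound $O(|r|^{d-1}2^l)$ and Plancherel give the main term, and its complement, where the rapid decay in \eqref{eqn:ker} together with the crude bound $\norm{b}_{L^1}\les 2^{ld/2}\norm{b}_{L^2}$ absorbs the loss. The only cosmetic difference is that you track the decay through level sets of $s(x)=|\psi(x)-r|$ while the paper simply uses the pointwise factor $2^{-lN}$ off the shell and integrates $(1+|\psi(x-y)-r|)^{-N}$ directly.
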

The proof of Lemma \ref{lem:L1estimate} will be given at the end of this subsection.
\begin{proof}[Proof of Proposition \ref{prop:globcpro}]
For a given $p$, let $p_1$ be $p< p_1<\frac{2(d-1)}{d+1}$.
We shall prove that
\begin{align} \label{eqn:glocp}
\norm{S\gamma}_{L^{p_1}(\R^d)} &\les 2^{ld(\frac{1}{p_1}-\frac{1}{2})} \norm{ \gamma}_{L^{p_1}(\mu_d)}, \\ \label{eqn:gloc1}
\norm{S\gamma}_{L^{1}(\R^d)} &\les 2^{l/2} \norm{ \gamma}_{L^{1}(\mu_d)}.
\end{align}

\eqref{eqn:glocp} is a consequence of \eqref{eqn:goalc1rescale} with
$$f(r,y) = \chi_{\{r:|r|\geq 2^{l+l_0}\}}(r) \sum_{z\in\Z^d} \gamma(r,z) b_z(y).$$

Next, we turn to \eqref{eqn:gloc1}. By Lemma \ref{lem:L1estimate} and the normalization hypothesis, we have
\begin{equation*}\label{eqn:l1e}
\norm{K_r*b_z}_{L^1} \les (1+|r|)^{(d-1)/2} 2^{l/2}.
\end{equation*}
Note that this is an improvement over the trivial estimate by \eqref{eqn:l1}
$$\norm{K_r*b_z}_{L^1} \leq \norm{K_r}_{L^1} \norm{b_z}_{L^1}  \les (1+|r|)^{(d-1)/2} 2^{ld/2}.$$
This is exactly where the $\epsilon$ gain is obtained. By the triangle inequality, 
$$\norm{S\gamma}_{L^{1}(\R^d)} \les  2^{l/2} \sum_{z\in\Z^d} \int_{|r|\geq 2^{l+l_0}} |\gamma(r,z)| (1+|r|)^{d-1} dr = 2^{l/2}  \norm{ \gamma}_{L^{1}(\mu_d)}.$$
We finish the proof by real interpolation of \eqref{eqn:glocp} and \eqref{eqn:gloc1}.
\end{proof}

\begin{proof}[Proof of Lemma \ref{lem:L1estimate}]
Let $c_Q$ be the center of $Q$. As in the proof of Proposition \ref{prop:char}, we may assume that $r\geq 2^{l+l_0}$. Then $K_r*b$ is essentially supported in the $2^l$ neighbourhood of the hypersurface $\{x: \psi(x-c_Q) = r \}$. Indeed, if we set $\Psi_r(Q) = \{ x: |\psi(x-c_Q) - r| \leq 2^{l+l_0} \}$, then we have
$$\norm{K_r*b}_{L^1(\Psi_r(Q)^c)} \leq 2^{-lN} |r|^{(d-1)/2}  \norm{b}_{L^1},$$
which follows from the kernel estimate and 
$$|\psi(x-y) - \psi(x-c_Q)| \leq \norm{\nabla \psi}_{L^\infty} |c_Q-y|
\leq 2^{l+l_0-1}.$$ 

Since $\norm{b}_{L^1} \leq 2^{ld/2} \norm{b}_{L^2}$, we are left with the $L^1$ estimate over $\Psi_r(Q)$. Using that the measure of $\Psi_r(Q)$ is $O(|r|^{d-1} 2^l)$, we have by the Cauchy-Schwarz inequality,
\begin{align*}
\norm{K_r*b}_{L^1(\Psi_r(Q))} \leq |r|^{(d-1)/2} 2^{l/2} \norm{K_r*b}_{L^2} \les
|r|^{(d-1)/2} 2^{l/2} \norm{b}_{L^2}, 
\end{align*}
where we have used Plancherel's theorem and $|\widehat{K_r}(\xi)| \les 1$.
\end{proof}


\end{document}